\newenvironment{sqcases}{%
  \matrix@check\sqcases\env@sqcases
}{%
  \endarray\right.%
}
\def\env@sqcases{%
  \let\@ifnextchar\new@ifnextchar
  \left[
  \def\arraystretch{1.2}%
  \array{@{}l@{\quad}l@{}}%
}
\newtheorem{theorem}{Theorem}[section]
\newtheorem{lemma}{Lemma}[section]
\theoremstyle{definition}
\newtheorem{remark}{Remark}[section]
\numberwithin{equation}{section}
\def\f{\longrightarrow}
\def\N{\mathbb{N}}
\def\e{\varepsilon}
\def\l{\lambda}
\def\x{\bar{x}}
\def\u{\bar{u}}
\def\p{\bar{p}}
\def\xxi{\bar{\xi}}
\def\nnu{\bar{\nu}}
\def\zzeta{\bar{\zeta}}
\def\ttheta{\bar{\theta}}
\def\vvartheta{\bar{\vartheta}}
\def\oomega{\bar{\omega}}
\def\a{\alpha}
\def\<{\langle}
\def\>{\rangle}
\def\e{\varepsilon}
\def\R{\mathbb{R}}
\def\inte{\textnormal{int}\,}
\def\clo{\textnormal{cl}\,}
\def\epi{\textnormal{epi}\,}
\def\bdry{\textnormal{bdry}\,}
\def\dom{\textnormal{dom}\,}
\def\conv{\textnormal{conv}\,}
\def\Gr{\textnormal{Gr}\,}
\def\gk{\gamma_k}
\def\gkn{\gamma_{k_n}}
\def\Vphi{\varPhi}
\def\CO{\mathsf{C}}
\def\-{\textnormal{-}}
\newcommand*{\tran}{^{\mkern-1.5mu\mathsf{T}}}
\def\sp{\hspace{0.015cm}}
\def\bp{\hspace{-0.08cm}}
\def\bbp{\hspace{-0.04cm}}
\def\bbbp{\hspace{-0.02cm}}
\def\TV{_{\textnormal{\tiny{T.\sp V.}}}}
\def\C{\mathrm{C}}
\def\W{\mathrm{W}}
\def\t{\scaleto{\times}{6pt}}
\def\0gk{\scaleto{0,\gk}{3.5pt}}
\def\supp{\textnormal{supp}\,}
\def\z{\bar{z}}
\begin{document}
\setcounter{page}{1}

\vspace*{1.0cm}
\title[Nonsmooth optimality criterion for $\W^{1,2}$-controlled sweeping process]
{Nonsmooth optimality criterion for a $\W^{1,2}$-controlled sweeping process: Nonautonomous perturbation}
\author[C. Nour, V. Zeidan]{ Chadi Nour$^{1}$, Vera Zeidan$^{2,*}$}
\maketitle
\vspace*{-0.6cm}

\begin{center}
{\footnotesize {\it
$^1$Department of Computer Science and Mathematics, Lebanese American University, Byblos Campus,\\P.O. Box 36, Byblos, Lebanon\\
$^2$Department of Mathematics, Michigan State University,  East Lansing, MI 48824-1027, USA

}}\end{center}

\vskip 4mm {\small\noindent {\bf Abstract.} We extend to {\it $\C\t \W^{1,2}$-local minimizers} and {\it nonautonomous} perturbation function, the necessary optimality conditions derived in \cite{ZNpaper}, via continuous-time approximations, for {\it $\W^{1,2}\t \W^{1,2}$-local minimizers} of an optimal control problem governed by a controlled nonconvex sweeping process with {\it autonomous} perturbation function.\\
\noindent{\bf Keywords.} 
Controlled and perturbed nonconvex sweeping process; Local minimizers; Necessary optimality conditions; Continuous-time approximations; Nonsmooth analysis.}

\renewcommand{\thefootnote}{}
\footnotetext{ $^*$Corresponding author.
\par
E-mail addresses: cnour@lau.edu.lb (C. Nour), zeidan@msu.edu (V. Zeidan).
\par
Received; Accepted}

\renewcommand{\thefootnote}{\arabic{footnote}}

\section{Introduction} Moreau's {\it sweeping process} is a mathematical model introduced in  \cite{moreau1,moreau2,moreau3} to describe an elastoplastic mechanical system. Since then, this model  appeared in several fields such as mechanics, electrical circuits, engineering, economics, crowd motion control, traffic equilibria, hysteresis phenomena, etc. (see, e.g., \cite{outrata} and its references). Since the dynamic of this model is an evolution differential inclusions  involving an unbounded and discontinuous multifunction, namely, the {\it normal cone} to a set, the sweeping process model falls outside the scope of standard differential inclusions. Therefore, new techniques are required to address optimal control problems over sweeping processes.

In this paper, we are interested in deriving necessary optimality conditions,  phrased in terms of the weak-Pontryagin-type maximum principle, for optimal control problems governed by $\W^{1,2}$-controlled and {\it nonautonomously} perturbed sweeping processes. The autonomous case  was successfully treated in \cite{cmo0,cmo,cmo2} via {\it discrete approximations}, and in \cite{ZNpaper} via {\it continuous approximations}. In these references, the authors considered {\it $\W^{1,2}\t \W^{1,2}$-local minimizers}, and obtained in addition to necessary optimality  conditions,  the {\it strong} convergence of velocities, that is, the optimal states of the original problem are {\it strongly} approximated, in the $\W^{1,2}$-norm, by the optimal states of the approximating problems. It is worth to mention that,
unlike the case for standard optimal control problems where the optimal control functions have {\it weak} regularity properties, applications of optimal control problems over {\it sweeping processes} have shown to possess optimal control functions having {\it strong} regularity properties, such as, $\W^{1,2}$ (see  e.g., \cite{ cmo}, \cite[Examples 9.1 \& 9.2]{cmo2}).

The continuous approximation used in \cite{ZNpaper} is based on approximating the normal cone by an {\it exponential penalization} term. This innovative technique was introduced in \cite{pinho,pinhoEr}, and further developed in \cite{pinhonum,pinholast,verachadi,verachadiimp,verachadinum}.  The {\it autonomous} assumption on the perturbation function in \cite{ZNpaper}, and  also in \cite{cmo0,cmo,cmo2},  played a crucial role in the derivation of the necessary optimality conditions using  $W^{1,2}$-controls, especially, in the proof of the strong convergence of velocities. 

The goal of this paper is to extend the weak Pontryagin maximum principle derived in \cite{ZNpaper} to the case where the perturbation function is {\it nonautonomous} and the $t$-dependence is merely {\it measurable}. In order to reach this goal, 
it turns out that the strong convergence of velocities has to be weakened in the following sense:  While the convergence of  the approximating problems solutions  to those  of the original problem remains  {\it uniform} for the {\it state component} and  {\it strong} in $\W^{1,2}$ for the {\it control component},   it is rather  {\it weak*} in $L^{\infty}$  for the state {\it velocities}. Consequently, our necessary optimality conditions, which coincide with those obtained in \cite{ZNpaper}, are derived for {\it $\C\t \W^{1,2}$-local minimizers}. For this class of local minimizers, our  formulation  here of the approximating problems  via the exponential penalization technique  is different than that in \cite{ZNpaper}, and so is the proof of existence of optimal solutions for these problems (Lemma \ref{lasthopefully}) as well as the proof of Lemma \ref{mpw12aprox}.

The paper is organized as follows. In the next section, we present our basic notations, we define our optimal control problem $(P)$ governed by a $\W^{1,2}$-controlled and perturbed sweeping process, we list the hypotheses satisfied by the data of $(P)$, and we state the main result of the paper, namely, necessary optimality conditions in the form of weak Pontryagin principle for $\C\t\W^{1,2}$-local minimizers of $(P)$. Section \ref{proofmn} is devoted to the proof of our main result. An example illustrating the utility of our main result is provided in Section \ref{example}.

\section{Main result}

\subsection{Basic notations} The following are the basic notations and definitions used in this paper:
\begin{itemize}
\item For the Euclidean norm and the usual inner product, we use $\|\cdot\|$ and $\<\cdot,\cdot\>$, respectively. 
\item For $y\in\R^n$ and $r>0$, we define the open (resp. closed) ball centered at $y$ with radius $r$ as $B_{r}(y):=y+rB$ (resp. $\bar{B}_{r}(y):=y+r\bar{B}$), where $B$ and $\bar{B}$ denotes the open and the closed unit ball, respectively. 
\item For $C\subset\R^n$, the boundary, the interior, the closure, the convex hull, the complement, and the polar of $C$ are denoted  by $\bdry C$, $\inte C$, $\clo C$, $\conv C$, $C^c$, and $C^\circ$, respectively. 
\item The distance from a point $x\in\R^n$ to a set $C\subset \R^n$ is denoted by $d(x,C)$.
\item For an extended-real-valued function $h\colon\R^n\f\R\cup\{\infty\}$, the effective domain of $h$ is $\dom h$, and the epigraph of $h$ is $\epi h$.
\item For a multifunction $F\colon\R^n\rightrightarrows\R^{m}$, we denote by $\Gr F\subset \R^n\times\R^m$ the graph of $F$. 
\item \sloppy The space $L^p([a,b];\R^n)$ designates the Lebesgue space of $p$-integrable functions $h\colon [a,b]\f\R^n$. We denote by $\|\cdot\|_p$ and $\|\cdot\|_{\infty}$ the norms of $L^p([a,b];\R^n)$ and $L^{\infty}([a,b];\R^n)$ (or $\C([a,b];\R^n)$), respectively. For $C\subset\R^d$ compact, the set of  continuous functions from  $C$ to $\R^n$ is denoted by $\C(C;\R^n)$.  
\item The set of all $m\times n$-matrix functions  on $[a,b]$ is denoted by $\mathscr{M}_{m\times n}([a, b])$. 
\item A function $h\colon[a,b]\f\R^n$ is said to be a $BV$-function if $h$ has a  bounded variation. The set of all such functions is denoted by  $BV([a,b];\R^n)$. We denote by $NBV[a,b]$ the normalized space of $BV$-functions on $[a,b]$ that consists of those $BV$-functions $h$ such that $h(a)=0$ and $h$ is right continuous on $(a,b)$ (see e.g., \cite[p.115]{luenberger}). 
\item The space $\C^*([a,b]; \R)$ denotes the dual of $\C([a,b];\R),$ equipped with the supremum norm. The induced norm on $\C^*([a,b]; \R)$ is denoted by  $\|\cdot\|\TV$. As a consequence of Riesz representation theorem, we can interpret the elements of $\C^*([a,b]; \R)$ as being in $\mathfrak{M}([a,b])$, the set of finite signed Radon measures on $[a,b]$ equipped with the weak* topology. Thereby,  to each element of $\C^*([a,b]; \R)$ it corresponds a unique element in $NBV[a,b]$ related through the Stieltjes integral and both elements  have the same total variation. The set $\C^{\oplus}(a,b)$ designates the subset of  $\C^*([a,b];\R)$  taking nonnegative values on nonnegative-valued functions in $\C([a,b]; \R)$. 
\item By $\W^{k,p}([a,b];\R^n)$, $k\in\N$ and $p\in [0,+\infty]$, we denote the classical Sobolev space. Hence, the set of all absolutely continuous functions from $[a,b]$ to $\R^n$ is $\W^{1,1}([a,b];\R^n)$. Note that in this paper, the Sobolev space $\W^{1,2}([a,b];\R^m)$ will be considered with the norm $\|u(\cdot)\|_{W^{1,2}}:=\|u(\cdot)\|_\infty + \|\dot{u}(\cdot)\|_2.$ Hence, the convergence of a sequence $u_n$ strongly in the norm topology of the space $W^{1,2}([a,b];\R^m)$ is equivalent to the uniform convergence of $u_n$ on $[a,b]$ and the strong  convergence  in $L^2$ of its derivative $\dot{u}_n$.
\item For $C\subset\R^n$ closed and $c\in C$, we denote by $N^P_C(c)$, $N_C^L(c)$, and $N_C(c)$, the {\it proximal}, the {\it Mordukhovich} (or {\it limiting}), and the {\it Clarke normal cones} to $C$ at $c$, respectively.  
\item For $F\colon [a,b]\rightrightarrows\R^m$ a multifunction with closed and nonempty values, $\bar{N}^L_{F(t)}(y)$ stands for the {\it graphical closure} at $(t,y)$ of the multifunction $(t,y)\mapsto{N}^L_{F(t)}(y)$, that is, the graph of $\bar{N}^L_{F(\cdot)}(\cdot)$ is the closure of the graph of $N^L_{F(\cdot)}(\cdot)$. 
\item For $h\colon\R^n\f\R\cup\{\infty\}$ lower semicontinuous and $x\in\dom h$, we denote by  $\partial^P h (x)$, $\partial^L h(x)$, and $\partial h(x)$ the \textit{proximal}, the {\it Mordukhovich} (or \textit{limiting}), and the \textit{Clarke subdifferential}  of $h$ at  $x$, respectively. Note that if $h$ is Lipschitz near $x$, then the {\it Clarke generalized gradient} of $h$ at $x$ is also denoted by $\partial h(x)$.
\item If $h\colon\R^n\f\R\cup\{\infty\}$ is $\CO^{1,1}$ near $x\in\dom h$, then the  {\it Clarke generalized Hessian} of $h$ at $x$ is denoted by $\partial^2 h(x)$. On the  other hand, if  $H\colon\R^n\f\R^n$ is Lipschitz near $x\in\R^n$, then the {\it Clarke generalized Jacobian} of $H$ at $x$ is denoted by $\partial{H}(x)$.
\item For $F\colon [a,b]\rightrightarrows\R^m$ a lower semicontinuous multifunction with closed and nonempty values, we define
$$\hspace*{1.3cm} \partial_x^{\sp >}d(x,F(t)):= \conv \left\{\zeta=\lim_{i\f\infty}\zeta_i: \;\|\zeta_i\|=1,\;\zeta_i\in N_{F(t_i)}^P(x_i)\;\hbox{and}\;(t_i,x_i)\xrightarrow{{\Gr F}} (t,x)\right\},$$
 where $(t_i,x_i)\xrightarrow{{\Gr F}} (t,x)$ signifies that $(t_i,x_i)\f (t,x)$ with $x_i\in F(t_i)$ for all $i$. Note that $\partial_x^{\sp >}d(x,F(t))$ coincides with $\partial_x^{\sp >}g(t,x)$ of \cite[p.121]{clarkeold} for $g(t,x):=d(x,F(t))$, see  \cite[Corollary 2.2]{lowen91}.
\item For $r>0$, a closed and nonempty set $S\subset \R^n$ is said to be $r$-prox-regular if for all $s\in\bdry S$ and for all $\zeta\in N_S^P(s)$ unit, we have $\<\zeta,x-s\>\leq\frac{1}{2r}\|x-s\|^2$ for all $x\in S.$ For more information about prox-regularity, see \cite{prt}.
\end{itemize}

\subsection{Statement of the problem $(P)$ and hypotheses} We consider the following fixed time Mayer-type optimal control problem involving $\W^{1,2}$-controlled and perturbed sweeping systems
$$\begin{array}{l} (P)\colon\; \hbox{Minimize}\;g(x(0),x(1))\vspace{0.1cm}\\ \hspace{0.9cm} \hbox{over}\;(x,u)\in \W^{1,1}([0,1];\R^n)\times \mathscr{W} \;\hbox{such that}\;\\[2pt] \hspace{0.9cm}  \begin{cases} (D)  \begin{sqcases}\dot{x}(t)\in f(t,x(t),u(t))-\partial \varphi(x(t)),\;\;\hbox{a.e.}\;t\in[0,1],\\x(0)\in C_0\subset \dom \varphi, \end{sqcases}\vspace{0.1cm}\\ x(1)\in C_1, \end{cases}
    \end{array}$$ where, $g\colon\R^n\times\R^n\f\R\cup\{\infty\}$, $f\colon[0,1]\times\R^n\times \R^m \f\R^n$, $\varphi\colon\R^n\f\R\cup\{\infty\}$, $\partial$ stands for the Clarke subdifferential, $C:=\dom\varphi$ is the zero-sublevel set of a function $\psi\colon\R^n\f\R$, that is, $C=\{x\in\R^n : \psi(x)\leq0\},$ $C_0\subset C$, $C_1\subset\R^n$, and, for $U\colon[0,1]\rightrightarrows\R^{m}$ a multifunction and $ \mathbb{U}:=\bigcup_{t\in[0,1]} U(t)$,  the set of control functions $\mathscr{W}$ is defined by 
\begin{equation*} \label{setw}\hspace{-0.2cm}
\mathscr{W}:=\W^{1,2}([0,1]; \mathbb{U})=\left\{ u\in \W^{1,2}([0,1]; \R^m): u(t) \in U(t),\;\; \forall\sp t\in [0,1]\right\}. 
\end{equation*}
Note that if $(x,u)$ solves  ($D$), it necessarily follows that $x(t)\in C$, $\forall\sp t\in[0,1]$.

A pair $(x,u)$ is {\it admissible} for $(P)$ if $x\colon[0,1]\f\R^n$ is absolutely continuous, $u\in \mathscr{W}$, and $(x,u)$ satisfies the  {\it controlled\,} and {\it perturbed} {\it sweeping process} $(D)$, called the {\it dynamic} of $(P)$. An admissible pair $(\x,\u)$ for $(P)$ is said to be a {\it $\C\t\W^{1,2}$-local minimizer} if there exists $\delta >0$ such that \begin{equation}\label{optimal}g(\x(0),\x(1))\le g(x(0),x(1)), \end{equation}
for all $(x,u)$ admissible for $(P)$ with $\|x- \x\|_{\infty}\leq\delta$, $\|u- \u\|_{\infty}\leq\delta$ and $\|\dot{u}- \dot{\u}\|_{2}^2\leq\delta$. Note that if the inequality \eqref{optimal} is satisfied by any admissible pairs $(x,u)$, then $(\x,\u)$ is called a {\it global minimizer} (or an {\it optimal solution}\sp) for $(P)$.

Let  $(\x,\u)$ be a $\C\t\W^{1,2}$-local minimizer for $(P)$  with associated $\delta$, such that the following hypotheses hold for $\bar{\mathbb{B}}_{\delta}(\x):=\bigcup\limits_{t\in [0,1]} \bar{B}_{\delta}(\x(t))$:
\begin{enumerate}[label=\textbf{H\arabic*}:]
\item There exist $\tilde{\rho}>0$ and $M_\ell>0$ such that $f(\cdot, x,u)$ is Lebesgue-measurable for  $(x,u)\in [C\cap \bar{\mathbb{B}}_{\delta}(\x)]\times [(\mathbb{U}+\tilde{\rho}\bar{B}) \cap \bar{\mathbb{B}}_{\delta}(\u)]$; and  for a.e. $t\in  [0, 1]$ we have: $(x,u)\mapsto f (t, x, u)$ is $M_\ell$-Lipschitz on $\left[C \cap \bar{B}_{\delta}(\x(t))\right]\times \left[\big(U(t)+\tilde{\rho}\bar{B}\big)\cap  \bar{B}_{\delta}(\u(t))\right]$; and  $\|f(t,x,u)\| \leq M_\ell$  for all $(x,u)\in \left[ C \cap \bar{B}_{\delta}(\x(t))\right] \times \left[U(t)\cap \bar{B}_{\delta}(\u(t))\right].$ 
\item The set $C:=\dom \varphi$ is given by  $C=\{x\in\R^n : \psi(x)\leq 0\},$ where $\psi:\R^n\f\R.$
\begin{enumerate}[label=\textbf{H2.\arabic*}:]
\item There exists $\rho>0$ such that $\psi$ is $\CO^{1,1}$ on $ C+\rho{B}$.
\item There is a constant $\eta>0$ such that $\|\nabla\psi (x)\|>2\eta\;$ for all $x$ satisfying $\psi(x)=0.$
\item The function $\psi$ is coercive, that is, $\lim_{\|x\|\f\infty} \psi(x)=+\infty.$\footnote{This hypothesis is only required to  guarantee that $C$ is compact. Hence, (H2.3) can be replaced by the boundedness of $C$.}
\item The set $C$ has a connected interior.\footnote{This hypothesis is only imposed to obtain the extension function $\Vphi$ of $\varphi$, see \cite[Remark 3.2 \& Lemma 3.4$(iii)$]{verachadiimp}. Thus, when such an extension is readily available, as is the case when $\varphi$ is the {\it indicator} function of $C$, hypothesis (H2.4) is omitted.}
\end{enumerate}
\item The function $\varphi$ is globally Lipschitz on $C$ and $\CO^{1}$ on $\inte C$. Moreover, the function $\nabla\varphi$ is globally Lipchitz on $\inte C$.
\item For the sets $C_0$, $C_1$, and $U(\cdot)$  we have:\begin{enumerate}[label=\textbf{H4.\arabic*}:]
\item The set $C_0\subset C$ is nonempty and closed.
\item The graph of $U(\cdot)$ is a $\mathscr{L}\times\mathscr{B}$ measurable set,  and,  for $t\in [0,1],$ $U(t)$ is closed, and bounded uniformly in $t$.
\item The set $C_1\subset\R^n$ is nonempty and closed.
\item The multifunction $U(\cdot)$ is lower semicontinuous.
\item The multifunction $U(\cdot)$ satisfies the constraint qualification (CQ) at $\u$, that is, $$\conv (\bar{N}^L_{U(t)}(\u(t)))\;\hbox{is pointed}\;\,\forall t\in[0,1].\footnote{For more information about the (CQ) property, see \cite[Remark 5.2]{ZNpaper}.}$$
\end{enumerate}
\item There exist $\tilde{\rho}>0$ and $L_g> 0$ such that $g$ is $L_g$-Lipschitz on $\tilde{C}_0(\delta)\times \tilde{C_1}(\delta)$, where \begin{equation*}\label{c01kin} \tilde{C}_i(\delta):=\big[\left(C_i\cap \bar{B}_{\delta}(\x(i))\right)+\tilde{\rho}\bar{B}\big]\cap C,\;\; \text{for}\;\;  i=0,1.\end{equation*}
\end{enumerate}

Let $M_C$ be a bound of the compact set $C$. We denote by $\bar{M}_\psi$  an upper bound of $\|\nabla\psi(\cdot)\|$ on $C$, and  by $2 M_\psi$  a {\it Lipschitz constant} of $\nabla \psi(\cdot)$ over the compact set $C+\frac{\rho}{2}\bar{B}$ chosen large enough so that $M_\psi\geq \frac{4\eta}{\rho}.$  
\begin{remark}\label{assumpf} Since $f$ satisfies (H1), then by \cite[Theorem 1]{hiriartlip} applied to each component, $f_i$, of $f=(f_1,\cdots,f_n)$, 
there exists a function $\tilde{f}\colon[0,1]\times\R^n \times \R^m\f\R^n$, such that, for almost all $t\in[0,1]$, $\tilde{f}(t,\cdot,\cdot)$ is globally Lipschitz on $\R^n \times \R^m$, and $f(t,x,u)=\tilde{f}(t,x,u)$ for all $(x,u)\in \left[C \cap \bar{B}_{\delta}(\x(t))\right]\times \left[\big(U(t)+\tilde{\rho}\bar{B}\big)\cap  \bar{B}_{\delta}(\u(t))\right]$. Moreover, there exists a new constant $M\geq M_{\ell}$ such that $\tilde{f}$ satisfies the assumption (A1) of \cite{verachadiimp}, in which the constant multifunction $U$ is replaced by $U(\cdot)\cap \bar{B}_{\delta}(\u(\cdot))$. Since in this paper we only consider {\it local} optimality notions, then, without loss of generality, we shall use  the function $f$ instead of its extension $\tilde{f}$. In particular, we use that $f$ satisfies the assumption (A1) of \cite{verachadiimp}, and hence, all the results of \cite[Sections 3, 4 \& 5]{verachadiimp} are valid. Now, by \cite[Lemma 3.4]{verachadiimp},  $C$ is $\frac{\eta}{M_{\psi}}$-prox-regular,  and $\varphi$ admits a $\CO^{1}$-extension $\Vphi$ from $C$ to  $\R^n$ satisfying $\partial \varphi(x)=\{\nabla \Vphi(x)\} + N_C(x)$ for all $x\in C$, with     
\begin{equation}\label{normal-cone}
N_C(x)=\{ \lambda \nabla\psi(x): \lambda \ge 0\},\;  \forall x\in \bdry C,
\end{equation}
and for some $K>0$,  
$$\lvert\Vphi(x)\rvert\leq K\;\,\hbox{and}\;\,\|\nabla\Vphi(x)\|\leq K,\;\,\forall x\in\R^n,\;\,\hbox{and}\;\,\|\nabla\Vphi(x)-\nabla\Vphi(y)\|\leq K\|x-y\|,\;\,\forall  x,\,y\in\R^n.$$  
This gives that $(D)$ can be equivalently phrased in terms of the normal cone to $C$ and the extension $\Vphi$ of $\varphi$, as follows 
$$(D) \begin{sqcases}\dot{x}(t)\in  f_{\Vphi}(t,x(t),u(t)) -N_C(x(t)),\;\;\hbox{a.e.}\;t\in[0,1],\\x(0)\in C_0\subset C,	 \end{sqcases}\\$$
where $f_{\Vphi}\colon [0,1]\times \R^n\times \R^m \f \R^n$ is defined by
\begin{equation*} \label{f.phi} f_{\Vphi}(t,x,u):=f(t,x,u)-\nabla\Vphi(x),\;\;\;\forall (t,x,u)\in [0,1]\times \R^n\times \R^m,\end{equation*}
and hence, $\|f_{\Vphi}(t,x,u)\| \le \bar{M}:=M+K$.
\end{remark}

The following notations and facts, extracted from \cite{verachadiimp}, will be used throughout the paper. 
\begin{itemize}
\item For any $(x,u)$ solution of $(D)$, we have by \cite[Equation 50]{verachadiimp} that  
$$\|\dot{x}(t)-f_{\Vphi}(t,x(t),u(t))\| \le \|f_{\Vphi}(t,x(t),u(t))\|\le \bar{M},\; \; t\in[0,1]\;\text{a.e.,}$$ and hence, $\|\dot{x}\|_{\infty}\le 2\bar{M}$.
 \item For given $x(\cdot)\colon [0,1]\to \R^n,$ we define $$I^0(x):=\{t\in [0,1]:  x(t)\in \bdry C\}\;\,\hbox{and}\;\,I^{\-}(x):=[0,1]\setminus I^0(x).$$
\item We define the set $\mathscr{U}$ by
\begin{equation*}\label{scriptU} \mathscr{U}:=\{u:[0,1]\to \R^m : u \;\hbox{is measurable and}\; u(t)\in U(t), \; t\in [0,1] \;\hbox{a.e.}\}.\end{equation*}

\item For $(x,u)\in \W^{1,1}([0,1];\R^n)\times\mathscr{U}$ with $x(0)\in C_0$ and $x(t)\in C$ for all $t\in [0,1]$, we have from \eqref{normal-cone} and Filippov selection theorem (\cite[Theorem 2.3.13]{vinter}) that $x$  is a solution for  $(D)$ corresponding  to the control  $u$   {\it if and only if} there exists  a nonnegative measurable function $\xi$  supported on $ I^{0}(x)$  such that  $(x,u,\xi)$ satisfies
\begin{equation}\label{admissible-Pg} \dot{x}(t)= f_\Vphi(t,x(t),u(t))-\xi(t) \nabla\psi(x(t)),\;\;\;t\in[0,1]\; \textnormal{a.e.} \end{equation}
In this case, the nonnegative function  $\xi$ supported in $I^{0}(x)$ with $(x,u,\xi)$ satisfying equation \eqref{admissible-Pg}, is unique,  belongs to  $L^{\infty}([0,1];\R^{+})$,  and 
\begin{equation}\label{boundxig}  \begin{cases} \xi(t)=0 &\;\;\hbox{for}\;\;t\in I^\-(x),\\[4pt]\xi(t)=\frac{\|\dot{x}(t)-f_{\Vphi}(t,x(t),u(t))\|}{\|\nabla\psi(x(t))\|}\in \left[0, \frac{\bar{M}}{2\eta}\right]&\;\;\hbox{for}\;\;t\in I^0(x) \;\textnormal{a.e.},\\[5pt] \|\xi\|_{\infty}\le \frac{\bar M}{2\eta}.	
\end{cases}
\end{equation}
\item Since $(\x,\u)$ solves the dynamic $(D)$,  we denote by  $\bar{\xi}$  the corresponding function in $ L^{\infty}([0,1];\R^+)$ such that $(\x,\u,\bar{\xi})$ satisfies \eqref{admissible-Pg} and \eqref{boundxig}.
 \item  Let  $(\gk)_k$ be  a sequence  satisfying \begin{equation}  \label{assumpgk} \gk>\frac{2\bar{M}}{\eta}
 \;\,\hbox{for all}\; k\in\N,\;\hbox{and}\; \gk \xrightarrow[k\f\infty ]{}\infty.\end{equation}
 \item The sequence $(\a_k)_k $ is defined by  \begin{equation} \label{velo2} \a_{k}:=\frac{\ln \left(\frac{\eta\gk}{2\bar{M}}\right)}{\gk},\;\;\;k\in\N. \end{equation}
 By \eqref{assumpgk} and \eqref{velo2}, we have that  \begin{equation}\label{velo2bis}  \gk e^{-\a_{k}\gk}=\frac{2\bar{M}}{\eta},\;\;\a_k>0,\;\; \a_k\searrow \;\hbox{and}\;\lim_{k\f\infty}\a_{k}=0.
\end{equation}
\item The sequence  $(\rho_k)_k$ is defined by $\rho_{k}:=\frac{\a_k}{\eta}$ for all $k\in\N$. By (\ref{velo2bis}) we have that  $ \rho_k>0$ for all $k\in\N$, $\rho_k\searrow\,$ and $\,\lim\limits_{k\f\infty}\rho_{k}=0.$
\item For $k\in\N$, the {\it approximating} system $(D_{\gk})$ is defined as
$$({D}_{\gk}) \begin{sqcases} \dot{x}(t)=f_\Vphi(t,x(t),u(t))-\gk e^{\gk\psi(x(t))} \nabla\psi(x(t))\;\;\textnormal{a.e.}\; t\in[0,1],\\ 
x(0)\in C. \end{sqcases}$$
Lemma 4.1 of \cite{verachadiimp} yields that, for each $k$, the system $(D_{\gamma_k})$ with given $x(0)=c_{\gk}\in C$ and $u_{\gk}\in \mathscr{U}$, has a unique solution $x_{\gk} \in \W^{1,2}([0,1];\R^n)$ such that  $x_{\gk}(t)\in C$ for all $t\in [0,1]$, and $(\|\dot{x}_{\gk}\|_2)_k$ is uniformly bounded. To such a solution $x_{\gk}$, we associate
 \begin{equation} \label{defxi} \xi_{\gk}(\cdot):= \gk e^{\gk\psi(x_{\gk}(\cdot))}.\end{equation}
 \item For $k\in\N$, we define the set $C(k):=\{x\in C : \psi(x)\leq -\a_k\}\subset C.$
\item The sequence $(\bar{c}_{\gk})_k$ is defined by \begin{equation*}\label{cbark} \bar{c}_k:= 
     \begin{cases}\x(0),\;\forall k\in\N, &\;\;\hbox{if}\;\x(0)\in\inte C,\vspace{0.1cm}\\ {\x(0)}-\rho_k\frac{\nabla\psi(\x(0))}{\|\nabla\psi(\x(0))\|},\;\forall k\in\N,&\;\;\hbox{if}\; \x(0)\in\bdry C.\end{cases}\end{equation*}
 From \cite[Remark 3.6$(ii)$]{verachadiimp} we have that, for $k$ sufficiently large, $\bar{c}_{\gk}\in \inte C (k)$. Moreover, $\bar{c}_{\gk}\f \x(0)$ as $k\f\infty$.
 
 \item For each $k\in\N$, we denote by $\x_{\gk}$ the unique solution of ($D_{\gk}$) corresponding to ($\bar{c}_{\gk}, \u)$. Theorem 4.1 of \cite{verachadiimp} yields that $\x_{\gk}$  converges in $C$ uniformly to $\x$.
 \item We fix $\delta_{o}>0$ such that \begin{equation*}\label{delta0def} \delta_{o}\leq \begin{cases}\min\{\hat{r}_{\x(0)},\delta\}&\qquad\;\hbox{if}\;\x(0)\in\inte C,\vspace{0.1cm}\\ \min\{r_{\bbp o},\delta\}&\qquad\;\hbox{if}\;\x(0)\in\bdry C,\end{cases} \end{equation*}
where $r_{\bbp o}>0$ is the constant in \cite[Theorem 3.1$(iii)$]{verachadiimp}, and $\hat{r}_{\x(0)}>0$ with $\hat{k}_{\x(0)}\in\N$  are the constants in \cite[Remark 3.6$(ii)$]{verachadiimp} corresponding to  $c:=\x(0)$.
\item We define the set ${C_0}(k)$ by \begin{equation*}\label{c0kdef}{C_0}(k):= 
     \begin{cases}C_0\cap \bar{B}_{\delta_{o}}\bp\left({\x(0)}\right) ,\;\forall k\in\N, &\;\hbox{if}\;\x(0)\in\inte C,\vspace{0.1cm}\\ \left[C_0\cap\bar{B}_{\delta_{o}}\bp\left({\x(0)}\right)\right]-\rho_k\frac{\nabla\psi(\x(0))}{\|\nabla\psi(\x(0))\|},\;\forall k\in\N,&\;\hbox{if}\; \x(0)\in\bdry C.\end{cases}\end{equation*}
     One can easily verify that 
     \begin{equation}\label{limC0} \lim\limits_{k\to\infty} C_0(k) =C_0\cap\bar{B}_{\delta_{o}}\bp({\x(0)})\;\,\hbox{and}\,\;C_0(k) \subset \tilde{C}_0(\delta),\;\,\hbox{for}\,\;k\;\hbox{large enough}.
     \end{equation} Moreover, from \cite[Theorem 3.1$(iii)$ \& Remark 3.6$(ii)$]{verachadiimp}, we have that, for $k$ sufficiently large, 
     \begin{equation} \label{normalc1c0} C_0(k)\subset C(k)\subset C. \end{equation}
\item We define the set ${C_1}(k)$ by       
\begin{equation*} \label{C1kdefinition} C_1(k):=\left[\left(C_1\cap\bar{B}_{\delta_{o}}\bp({\x(1)})\right) -\x(1)+{\x}_{\gk}(1)\right]\cap C,\;\;\;k\in\N.\end{equation*}
One can easily verify that \begin{equation}\label{limC1} \lim_{k\to\infty} C_1(k) =C\cap C_1\cap\bar{B}_{\delta_{o}}\bp({\x(1)})\;\,\hbox{and}\,\;C_1(k) \subset \tilde{C}_1(\delta)\;\,\hbox{for}\,\;k\;\hbox{large enough}.\end{equation} 

\end{itemize}

\subsection{Statement of the main result} Before presenting the main result of this paper, namely the necessary optimality conditions for a  given $\C\t\W^{1,2}$-local minimizer  of $(P)$, we establish the following existence of optimal solution theorem for $(P)$, which is parallel to \cite[Theorems 4.1]{cmo0,cmo2} where a discretization technique is used. 
\begin{theorem}[Existence of optimal solution for $({P})$]\label{exop} Assume that  \textnormal{(H2)-(H4.3)} hold, and that$\sp:$
\begin{itemize}
	\item For fixed $(x,u)\in C\times \mathbb{U} $, $f(\cdot, x,u)$ is Lebesgue-measurable; and there exists $ M>0$ such that for \textnormal{a.e.} $t \in [0, 1]$ we have: $(x, u)\mapsto f (t, x, u)$ is continuous on $C\times U(t)$; for all $u\in U(t)$, $x\mapsto f (t, x, u)$ is $M$-Lipschitz on $C$; and  $\|f(t,x,u)\| \leq M $  for all $(x,u)\in  C \times U(t).$ 
	\item The function $g\colon \R^{n}\times\R^n\to \R\cup\{\infty\}$ is lower semicontinuous.
	\item A minimizing sequence $(x_j,u_j)$ for $({P})$ exists such that $(\|\dot{u}_j\|_2)_j$ is bounded.
	\item The problem $(P)$ has at least one admissible pair $(y_o,v_o)$ with $(y_o(0), y_o(1))\in \dom g$.
\end{itemize}
Then the problem $({P})$ admits a global optimal solution $(\tilde{x},\tilde{u})$ such that, along a subsequence, we have  
$${x}_{j}\xrightarrow[{C([0,1]; \R^n)}]{\textnormal{uniformly}}{\tilde{x}},\;\;{u}_{j}\xrightarrow[{C([0,1]; \R^m)}]{\textnormal{uniformly}}{\tilde{u}},\;\;\;\dot{x}_{j}\xrightarrow[{L^{\infty}([0,1]; \R^n)}]{\textnormal{weakly*}}\dot{\tilde{x}},\;\; \hbox{and}\;\;\dot{u}_{j}\xrightarrow[{L^2([0,1]; \R^m)}]{\textnormal{weakly}}\dot{\tilde{u}}.$$
\end{theorem}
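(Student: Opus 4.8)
The plan is to follow the classical direct method in the calculus of variations, adapted to the sweeping-process setting via the reformulation in Remark \ref{assumpf}. First I would extract from the minimizing sequence $(x_j,u_j)$ the compactness needed to pass to a limit. Since each $x_j$ solves $(D)$, the facts recorded after Remark \ref{assumpf} give $\|\dot x_j\|_\infty\le 2\bar M$ uniformly, and since $x_j(0)\in C_0\subset C$ with $C$ compact, the sequence $(x_j)$ is bounded in $\W^{1,\infty}$ and equi-Lipschitz; by Arzel\`a--Ascoli, after passing to a subsequence, $x_j\to\tilde x$ uniformly on $[0,1]$, and by the Banach--Alaoglu theorem $\dot x_j\xrightarrow{\ \ast\ }\dot{\tilde x}$ weak* in $L^\infty$ (the limit of $\dot x_j$ in the distributional sense must be $\dot{\tilde x}$ by the uniform convergence). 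For the controls, the hypothesis that $(\|\dot u_j\|_2)_j$ is bounded, together with $u_j(t)\in U(t)\subset\mathbb U$ and the uniform boundedness of $U(\cdot)$ from (H4.2), gives that $(u_j)$ is bounded in $\W^{1,2}$; after a further subsequence $\dot u_j\rightharpoonup w$ weakly in $L^2$ and $u_j\to\tilde u$ uniformly (Sobolev embedding $\W^{1,2}\hookrightarrow \C$ in one dimension, with the limit absolutely continuous and $\dot{\tilde u}=w$). Thus $\tilde u\in\W^{1,2}$ and the convergences in the statement hold.

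Next I would check that the limit pair $(\tilde x,\tilde u)$ is admissible for $(P)$. The endpoint constraints $\tilde x(0)\in C_0$, $\tilde x(1)\in C_1$ follow from the uniform convergence of $x_j$ and the closedness of $C_0,C_1$ in (H4.1), (H4.3); likewise $\tilde x(t)\in C$ for all $t$ since $C$ is closed. The control constraint $\tilde u(t)\in U(t)$ for all $t$ follows from the uniform convergence $u_j\to\tilde u$ and the closedness of $U(t)$: indeed for each fixed $t$, $u_j(t)\to\tilde u(t)$ with $u_j(t)\in U(t)$. The substantive point is that $\tilde x$ solves the differential inclusion $\dot{\tilde x}(t)\in f_\Vphi(t,\tilde x(t),\tilde u(t))-N_C(\tilde x(t))$ a.e. Here I would invoke the characterization \eqref{admissible-Pg}--\eqref{boundxig}: each $x_j$ carries a unique multiplier $\xi_j\in L^\infty$ supported on $I^0(x_j)$ with $0\le\xi_j(t)\le \bar M/(2\eta)$ and $\dot x_j=f_\Vphi(t,x_j,u_j)-\xi_j\nabla\psi(x_j)$ a.e. By the uniform bound on $\xi_j$, pass to a subsequence with $\xi_j\xrightarrow{\ \ast\ }\tilde\xi$ weak* in $L^\infty$, $\tilde\xi\ge0$. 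Passing to the limit in the equation: $f_\Vphi(t,x_j(t),u_j(t))\to f_\Vphi(t,\tilde x(t),\tilde u(t))$ for a.e.\ $t$ by the continuity of $f$ in $(x,u)$ (second bullet of the hypotheses) and of $\nabla\Vphi$, and is uniformly bounded by $\bar M$, so it converges strongly in every $L^p$, $p<\infty$; similarly $\nabla\psi(x_j)\to\nabla\psi(\tilde x)$ uniformly by (H2.1) and the uniform convergence of $x_j$. Writing the equation in integrated form $x_j(t)=\bar c_j+\int_0^t[f_\Vphi(s,x_j,u_j)-\xi_j\nabla\psi(x_j)]\,ds$ and passing to the limit (the product $\xi_j\nabla\psi(x_j)$ converges weakly in $L^1$ to $\tilde\xi\nabla\psi(\tilde x)$, being the product of a weak* convergent bounded sequence in $L^\infty$ with a uniformly convergent one), we obtain $\dot{\tilde x}=f_\Vphi(t,\tilde x,\tilde u)-\tilde\xi\nabla\psi(\tilde x)$ a.e. It remains to verify that $\tilde\xi$ is supported on $I^0(\tilde x)$: on $I^-(\tilde x)$, $\tilde x(t)\in\inte C$, so $\psi(\tilde x(t))<0$ on an open set, hence $\psi(x_j)<0$ there for $j$ large, forcing $\xi_j=0$ there by \eqref{boundxig}, and the weak* limit $\tilde\xi$ vanishes there as well. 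By the "if and only if" characterization, $\tilde x$ solves $(D)$ with control $\tilde u$.

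Finally, optimality: since $(x_j,u_j)$ is a minimizing sequence, $g(x_j(0),x_j(1))\to\inf(P)$, and by the uniform convergence of $x_j$ the pairs $(x_j(0),x_j(1))\to(\tilde x(0),\tilde x(1))$; lower semicontinuity of $g$ gives $g(\tilde x(0),\tilde x(1))\le\liminf_j g(x_j(0),x_j(1))=\inf(P)$. Since $(\tilde x,\tilde u)$ is admissible, equality holds and $\inf(P)$ is finite (it is $\le g(y_o(0),y_o(1))<\infty$ by the fourth bullet, and $>-\infty$ because $g$ is l.s.c.\ on the compact set $\tilde C_0(\delta)\times\tilde C_1(\delta)$, or simply because the value $g(\tilde x(0),\tilde x(1))$ is attained and real), so $(\tilde x,\tilde u)$ is a global optimal solution. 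The main obstacle I anticipate is the rigorous passage to the limit in the differential inclusion --- specifically, justifying that the weak* limit of the multiplier sequence $\xi_j$ is again a \emph{legitimate} multiplier supported on $I^0(\tilde x)$, since a priori the sets $I^0(x_j)$ need not converge to $I^0(\tilde x)$; the resolution is that only the one-sided inclusion $\supp\tilde\xi\subset I^0(\tilde x)$ is needed, and that follows cleanly from the openness of $I^-(\tilde x)$ together with the uniform convergence $x_j\to\tilde x$ and the pointwise structure of \eqref{boundxig}. A secondary technical point is ensuring $\tilde u\in\mathscr W$ rather than merely $\mathscr U$, which is exactly where the $\W^{1,2}$-bound on the controls is used.
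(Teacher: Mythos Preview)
Your proof is correct and follows essentially the same route as the paper's: Arzel\`a--Ascoli for $(x_j,u_j)$, weak* (resp.\ weak $L^2$) extraction for $\dot x_j$ (resp.\ $\dot u_j$), passage to the limit in the integrated form of \eqref{admissible-Pg} using a weak* limit $\tilde\xi$ of the uniformly bounded multipliers $\xi_j$, and the neighborhood argument on $I^-(\tilde x)$ to show $\tilde\xi$ is supported on $I^0(\tilde x)$. One small correction: the set $\tilde C_0(\delta)\times\tilde C_1(\delta)$ is not available here (it is introduced only relative to a given local minimizer under (H5)); the paper instead uses that $(x(0),x(1))$ lies in the compact set $C_0\times(C_1\cap C)$, on which the l.s.c.\ function $g$ is bounded below---your alternative argument via $g$ being $\R\cup\{\infty\}$-valued also works.
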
   
\begin{proof} \sloppy The fact that $(P)$ admits an admissible  pair  $(y_o,v_o)$ with $(y_o(0), y_o(1))\in \dom g$ yields that $\inf_{(x,u)} ({P})<\infty.$  Since all admissible solutions $(x,u)$ of ($P$)  have  $(x(0),x(1))$ in the compact set $C_0\times (C_1\cap C)$, then the  lower semicontinuity of $g$  gives  that $\inf_{(x,u)} ({P})$ is finite. As the minimizing sequence $(x_{j},u_{j})_j$  solves  $(D)$ and $C$ is compact,  it follows that the sequences  $(\|\dot{x}_j\|_{\infty})_j$   and $(\|x_j\|_{\infty})_j$ are bounded by $2\bar{M}$ and $M_C$, respectively. On the other hand, by hypothesis, we have   $(\|\dot{u}_j \|_2)_j$  is bounded, and by the $t$-uniform boundedness of $U(t)$  in (H4.2),  the sequence $(\|{u}_j\|_{\infty})_j$ is also bounded. Hence,  Arzela-Ascoli's theorem produces  a subsequence, we do not relabel, of  $(x_j , u_j )_j$, that converges uniformly to an absolutely continuous pair $(\tilde{x},\tilde{u})$ with $(\tilde{x}(t),\tilde{u}(t))\in C\times U(t)$ for all $t\in [0,1]$,  $(\dot{x}_{j})_j$ converging to $\dot{\tilde{x}}$ in the weak*-topology of $L^{\infty}$, and $(\dot{u}_{j})_j$ converging weakly in $L^2$ to $ \dot{\tilde{u}}$. As for all $j\in\N$,  $(x_j(0), x_j(1))\in  C_0\times C_1$, then (H4.1) and (H4.3) yield that $(\tilde{x}(0), \tilde{x}(1))\in C_0\times C_1$.  
To prove that $(\tilde{x},\tilde{u})$  satisfies the sweeping process in $(D)$, we shall use the equivalence  invoking \eqref{admissible-Pg}. Let $(\xi_j)_j$ be the $L^{\infty}([0,1],\R^{+})$- sequence  associated via \eqref{admissible-Pg}-\eqref{boundxig}  to the  sequence $(x_j,u_j)_j$ admissible for  $(D)$. As  \eqref{boundxig} yields that $(\|\xi_j\|_{\infty})_j$ is bounded by $\frac{\bar{M}}{2\eta}$, then $(\xi_j)_j$ admits a subsequence, we do not relabel, that  weakly* converges in $L^{\infty}([0,1];\R^+)$ to some $\tilde{\xi} \in L^{\infty}([0,1];\R^+)$. 
 Using  that $(x_j,u_j,\xi_j)$ satisfies  \eqref{admissible-Pg} together  with the assumptions on $f$, and the properties of $\Vphi$  and $\psi$,  it easily follows upon  taking the limit as $ j\f \infty$  in 
 $$x_{j}(t)= x_{j}(0) + \int_{0}^t [f_{\Vphi}(s,x_{j}(s),u_{j}(s)) -\xi_{j}(s) \nabla\psi(x_{j}(s))]\; ds, \;\; \forall t\in[0,1],$$
 that also  $(\tilde{x},\tilde{u},\tilde{\xi})$   satisfies \eqref{admissible-Pg}. We now show that $\tilde{\xi}$ is supported in $ I^{0}(\tilde{x})$. Let $t\in I^{\-}(\tilde{x})$ be fixed,  that is, $\tilde{x}(t)\in \inte C$. Since $(x_j)_j$ converges uniformly to $\tilde{x}$, then we can find $\tilde{\delta} >0$ and $j_o \in \N$ such that, for all $s\in (t-\tilde{\delta},t+\tilde{\delta})\cap [0,1]$ and for all $j\ge j_o$, we have $ x_j(s)\in \inte C$, and hence $\xi_j(s)=0$, as $\xi_j$ satisfies \eqref{boundxig}. Thus, $\xi_j(s)\f 0$ for $ s\in (t-\delta_o,t+\delta_o)\cap [0,1]$, and whence, $\tilde{\xi}(t)=0$, proving that $\tilde{\xi}$ is supported in $I^{0}(x)$. Thus, $(\tilde{x}, \tilde{u})$ solves ($D$) and  $(\tilde{x},\tilde{u},\tilde{\xi})$ satisfies \eqref{boundxig}. Therefore, $(\tilde{x},\tilde{u})$ is admissible for $({P})$. Owed to the lower semicontinuity of $g$  and  to $(\tilde{x},\tilde{u})$ being the uniform limit of the minimizing sequence $(x_{j},u_{j})_j$, the optimality of $(\tilde{x},\tilde{u})$  for $({P})$  follows readily. \end{proof}

The following theorem, Theorem \ref{mpw12bis},  is the main result of this paper. It provides necessary optimality conditions in the form of weak Pontryagin principle for a $\C\t\W^{1,2}$-local minimizer $(\x,\u)$ in $(P)$. These optimality conditions extend  those given in \cite[Theorem 4.8]{ZNpaper}, where $(\x,\u)$ is a  $\W^{1,2}\t \W^{1,2}$-local minimizer and the perturbation function is {\it autonomous}. Note that in the statement of  Theorem \ref{mpw12bis}, we use the following {\it nonstandard} notions of subdifferentials, which are {\it strictly smaller} than their counterparts in standard notions:
\begin{itemize}
\item $\partial_\ell\varphi$ and $\partial^2_\ell\varphi$  defined on $C$, are   the {\it extended  Clarke generalized gradient} and the {\it extended  Clarke generalized Hessian} of $\varphi$, respectively (see \cite [Equation (9)-(10)]{verachadiimp}). Note that if $\partial_\ell\varphi(x)$ is a singleton, then we use the notation $\nabla_{\bp\ell}$ instead of  $\partial_\ell$.
\item $\partial^{\sp (x,u)}_\ell f (t,\cdot,\cdot)$ is the {\it extended  Clarke generalized Jacobian} of $f(t,\cdot,\cdot)$ defined on the set     
 $\left[C \cap \bar{B}_{\delta}(\x(t))\right]\times \big[(U(t)+\tilde{\rho}\bar{B})\cap  \bar{B}_{\delta}(\u(t))\big]$ (see \cite [Equation (12)]{verachadiimp}).
\item $\partial^2_\ell\psi$ is the {\it Clarke generalized Hessian relative} to $\inte C$ of $\psi$ (see \cite [Equation (11)]{verachadiimp}).
\item $\partial^L_\ell g$ is the {\it limiting subdifferential} of $g$ {\it relative} to  $\inte \big(\tilde{C}_0(\delta)\times \tilde{C}_1(\delta)\big)$ (see \cite [Equation (8)]{verachadiimp}).
\end{itemize}

\begin{theorem}[Necessary optimality conditions]\label{mpw12bis} Let $(\x,\u)$ be a $\C\t\W^{1,2}$-local minimizer for  $(P)$ with associated $\delta>0$ at which \textnormal{(H1)-(H5)} hold. Then, 
there exist $\l\geq 0$, an adjoint vector ${\p}\in BV([0,1];\R^n)$, a finite signed Radon measure $\nnu$ on $[0,1]$ supported on $I^{0}(\x)$,  $L^{\infty}$-functions $\zzeta(\cdot)$, $\ttheta(\cdot)$ and $\vvartheta(\cdot)$ in $\mathscr{M}_{n\times n}([0,1])$, and an $L^{\infty}$-function $\oomega(\cdot)$ in $\mathscr{M}_{n\times m}([0,1])$,  such that 
$$\left((\zzeta(t),\oomega(t)),\ttheta(t), \vvartheta(t)\right)\bbp\in\bbp \;\partial^{\sp (x,u)}_\ell f(t,\x(t),\u(t))\times \partial^2_\ell\varphi(\x(t))\times \partial^2_\ell\psi(\x(t)),\;\sp t\in [0,1]\; \textnormal{a.e.},$$
and the following holds\sp$:$
\begin{enumerate}[$(i)$]
\item {\bf(The admissible equation)}
\begin{enumerate}[$(a)$]
\item $\dot{\x}(t)= f(t,\x(t),\u(t))- \nabla_{\bp\ell}\sp\sp\varphi(\x(t))-\bar{\xi}(t) \nabla\psi(\x(t)),\;\;  t\in [0, 1]\; \textnormal{a.e.},$
\item $\psi(\x(t))\le 0,\;\;\forall\sp t\in [0,1];$
\end{enumerate}
\item {\bf (The nontriviality condition)} $$\|\p(1)\| + \l=1;$$
\item {\bf (The adjoint equation)} 
 For any $h\in C([0,1];\R^n),$ we have\vspace{0.1cm} \begin{eqnarray*}\hspace*{0.9cm} \int_{[0,1]}\<h(t),d\p(t)\>&= &\int_0^1 \left\<h(t),\left(\ttheta(t)-\zzeta(t)\tran\right) \p(t)\right\>\sp dt \\[3pt]&+& \int_0^1 \xxi(t)\left\<h(t),\vvartheta(t)p(t)\right\>\sp dt+  \int_{[0,1]} \<h(t),\nabla \psi(\x(t))\>\sp d\nnu;\\[-0.3cm]\end{eqnarray*} 
 \item {\bf (The complementary slackness conditions)} 
\begin{enumerate}[$(a)$]
\item $\bar{\xi}(t)=0,\;\;\forall\sp t\in I^{\-}(\x)$,
\item $\bar{\xi}(t)\<\nabla\psi(\x(t),\bar{p}(t)\>=0,\;\;\forall\sp t\in [0, 1]\; \textnormal{a.e.};$
\end{enumerate}
\item {\bf (The transversality equation)} $$ (\p(0),-\p(1))\in  \l\partial_\ell^L g(\x(0),\x(1))+ \big[N_{C_0}^L(\x(0))\times\;N^L_{C_1}(\x(1))\big];$$
\item {\bf (The weak maximization condition)}  $$\oomega(t)\tran \p(t)\in \conv\bar{N}^L_{U(t)\cap \bar{B}_{\delta}(\u(t))}(\u(t)),\;\;  t\in [0,1]\; \textnormal{a.e.}\vspace{0.1cm}$$
If in addition there exist $\e_{o}>0$ and $r>0$ such that $U(t)\cap \bar{B}_{\e_{o}}(\u(t))$ is $r$-prox-regular for all $t\in [0,1]$, then for $t\in [0,1]$ \textnormal{a.e.}\sp, we have
$$\textstyle \max\left\{\left\<\oomega(t)\tran \p(t),u\right\>- \frac{\|\oomega(t)\tran \p(t)\|}{\min\{\e_{o},2r\}}\|u-\u(t)\|^2 : u\in U(t)\right\}\;\hbox{is attained at}\;\bar{u}(t).$$ 
\end{enumerate}
Furthermore, if $C_1 = \R^n$, then $\lambda\not= 0$ and is taken to be $1$, and the nontriviality condition $(i)$ is eliminated.
\end{theorem}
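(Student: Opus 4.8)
The plan is to derive Theorem \ref{mpw12bis} by passing to the limit in a sequence of approximating optimal control problems $(P_{\gk})$ built from the exponential penalization dynamics $(D_{\gk})$, following the continuous-approximation strategy of \cite{ZNpaper,verachadiimp} but adapted to the $\C\t\W^{1,2}$-local minimizer setting. First I would introduce, for each $k$, the penalized problem
$$(P_{\gk})\colon\;\hbox{minimize}\; g(x(0),x(1)) + (\hbox{penalty terms in}\;\|x-\x_{\gk}\|_\infty,\;\|u-\u\|_{W^{1,2}})\;\hbox{over}\;(x,u)\;\hbox{solving}\;(D_{\gk})\;\hbox{with}\;x(0)\in C_0(k),\;x(1)\in C_1(k),$$
the penalty being chosen (as in Lemma \ref{lasthopefully}, to be invoked) so that $(\x_{\gk},\u)$ is "nearly optimal" and so that any sequence of optimal pairs $(\x_{\gk}^*,\u_{\gk}^*)$ for $(P_{\gk})$ stays inside the $\delta$-tube around $(\x,\u)$ for $k$ large; here the key departure from \cite{ZNpaper} is that, because of the merely measurable $t$-dependence of $f$, one only gets $\u_{\gk}^*\to\u$ strongly in $\W^{1,2}$, $\x_{\gk}^*\to\x$ uniformly, and $\dot\x_{\gk}^*\to\dot\x$ weakly* in $L^\infty$ rather than strong $\W^{1,2}$-convergence of the states.

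Next I would establish the necessary optimality conditions for each $(P_{\gk})$: since $(D_{\gk})$ is a genuine (smooth, single-valued) controlled ODE with no normal-cone discontinuity, a standard nonsmooth Pontryagin maximum principle — e.g.\ the Clarke/Mordukhovich maximum principle for Mayer problems with $\W^{1,2}$ controls and mixed state-control constraints $u(t)\in U(t)$, exactly as packaged in Lemma \ref{mpw12aprox} — applies and yields multipliers $\l_{\gk}\ge 0$, adjoint arcs $p_{\gk}\in\W^{1,1}$, and measurable selections $\zeta_{\gk},\theta_{\gk},\vartheta_{\gk},\omega_{\gk}$ of the respective generalized Jacobians/Hessians, together with the adjoint equation for $(D_{\gk})$ (which, after differentiating the penalization term $\gk e^{\gk\psi(x)}\nabla\psi(x)$, produces a term $\xi_{\gk}\vartheta_{\gk}p_{\gk}$ plus a term $\gk\xi_{\gk}\<\nabla\psi,p_{\gk}\>\nabla\psi$ playing the role of the future measure $\nu$), the transversality inclusion against $N^L_{C_0(k)}\times N^L_{C_1(k)}$, and the weak maximization condition against $\bar N^L_{U(t)}$. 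One must also record the uniform-in-$k$ bounds: $\l_{\gk}+\|p_{\gk}(1)\|$ can be normalized to $1$; $\|p_{\gk}\|_\infty$ is bounded via Gronwall using the $K$-Lipschitz bound on $\nabla\Vphi$, the $M_\psi$-bound on $\nabla^2\psi$, and the uniform bound $\|\xi_{\gk}\|_\infty\le\bar M/(2\eta)$ coming from \eqref{boundxig}-type estimates in \cite{verachadiimp}; and the total variation of the measure $d\nu_{\gk}:=\gk\xi_{\gk}\<\nabla\psi(\x_{\gk}^*),p_{\gk}\>\nabla\psi(\x_{\gk}^*)\,dt$ is bounded, so $\nu_{\gk}\rightharpoonup^*\nnu$ along a subsequence with $\nnu$ supported on $I^0(\x)$ by the usual argument that $\psi(\x_{\gk}^*)\to\psi(\x)<0$ off $I^0(\x)$ forces $\xi_{\gk}\to 0$ there.

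Then I would pass to the limit $k\to\infty$. Extract subsequences so that $\l_{\gk}\to\l$, $p_{\gk}\to\p$ pointwise and in $L^1$ (Helly's selection theorem, giving $\p\in BV$), $\dot x_{\gk}^*\rightharpoonup^*\dot\x$, $\xi_{\gk}\rightharpoonup^*\xxi$ in $L^\infty$ (this $\xxi$ is the one associated to $(\x,\u)$ by uniqueness, which is where the identification $(i)(a)$ comes from), and $\zeta_{\gk},\theta_{\gk},\vartheta_{\gk},\omega_{\gk}\rightharpoonup^*\zzeta,\ttheta,\vvartheta,\oomega$ with the inclusions into the extended generalized Jacobians/Hessians preserved in the limit by the graph-closedness/convexity that defines the "$\ell$"-subdifferentials (upper semicontinuity of $\partial^L g$, $\partial^2\psi$, etc., together with $\x_{\gk}^*(t)\to\x(t)$ in $\inte C$ for $t\in I^-(\x)$, which is precisely why the relative-to-$\inte C$ notions appear). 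The adjoint integral identity $(iii)$ follows by testing against $h\in C([0,1];\R^n)$ and passing to the limit term by term, the penalization term converging to $\int\<h,\nabla\psi(\x)\>d\nnu$; the complementary slackness $(iv)(b)$ comes from $\xi_{\gk}\<\nabla\psi(\x_{\gk}^*),p_{\gk}\>\to\xxi\<\nabla\psi(\x),\p\>$ being forced to vanish in the limit (the $\gk$-blow-up in $\nu_{\gk}$ is absorbed precisely when $\<\nabla\psi,p_{\gk}\>\to 0$ on $I^0$); the transversality and weak maximization conditions pass to the limit using \eqref{limC0}, \eqref{limC1}, the $r$-prox-regularity quantitative estimate, and outer semicontinuity of $\bar N^L_{U(\cdot)}(\cdot)$; and nontriviality $\l+\|\p(1)\|=1$ survives the limit, with the final $C_1=\R^n$ remark obtained by noting $N^L_{C_1(k)}=\{0\}$ forces $p_{\gk}(1)\in\l_{\gk}\partial^L_\ell g$-component so $\l_{\gk}=0$ is incompatible with the normalization. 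The main obstacle is the limit passage in the adjoint equation and the complementary slackness $(iv)(b)$: one must control the potentially singular penalization term $\gk\xi_{\gk}\<\nabla\psi(\x_{\gk}^*),p_{\gk}\>\nabla\psi(\x_{\gk}^*)$ — its $L^1$-norm is $O(\gk)$ a priori — and show that, after the weak* compactness is extracted, the limiting measure $\nnu$ is finite and that the residual product $\xxi\<\nabla\psi(\x),\p\>$ vanishes a.e.; in the autonomous case of \cite{ZNpaper} strong $\W^{1,2}$-convergence of states made this clean, whereas here only weak* $L^\infty$-convergence of $\dot\x_{\gk}^*$ is available, so the argument must instead lean on the fine support properties of $\nu_{\gk}$, the uniform prox-regularity of $C$, and careful bookkeeping of which quantities converge strongly (the controls, via Lemma \ref{mpw12aprox}) versus only weakly (the velocities).
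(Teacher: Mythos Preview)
Your three-step outline (build $(P_{\gk})$ via exponential penalization, apply a standard nonsmooth maximum principle to each $(P_{\gk})$, pass to the limit) matches the paper's proof exactly, and you correctly invoke Lemmas \ref{lasthopefully} and \ref{mpw12aprox}. However, you have misplaced where the genuine work lies. You flag the limit passage (Step 3) as ``the main obstacle,'' worrying that only weak* $L^\infty$-convergence of $\dot x_{\gk}$ is available instead of the strong $\W^{1,2}$-convergence of \cite{ZNpaper}; but the paper's Step 3 is a single sentence: once Lemma \ref{mpw12aprox} delivers conditions $(i)$--$(v)$ in exactly the form of \cite[Proposition 4.7]{ZNpaper}, the limit argument of \cite[Theorem 4.8]{ZNpaper} applies verbatim. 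The state velocities $\dot x_{\gk}$ do not enter the adjoint equation, the complementary slackness, or the measure construction at all, so their weak* convergence is not an obstacle there.

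The nonautonomous difficulty is entirely absorbed in Steps 1--2: the \emph{formulation} of $(P_{\gk})$ (note the auxiliary state $z$ with $\dot z = \|\dot u - \dot{\u}\|^2$ and the hard constraints $x(t)\in\bar B_\delta(\x(t))$, $z(1)\in[-\delta,\delta]$ --- not the $\|x-\x_{\gk}\|_\infty$ penalty you sketch), the existence and convergence proof of Lemma \ref{lasthopefully} (which is where the $\C\times\W^{1,2}$-local, rather than $\W^{1,2}\times\W^{1,2}$-local, optimality is used), and the application of the multiple-state-constraint maximum principle in Lemma \ref{mpw12aprox}. Your proposal treats these as black boxes while elaborating on Step 3, which is the reverse of the paper's emphasis. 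Also, minor slips: the uniform bound on $\xi_{\gk}$ from Lemma \ref{lasthopefully}$(ii)$ is $2\bar M/\eta$, not $\bar M/(2\eta)$; and the $C_1=\R^n$ remark does not work via ``$N^L_{C_1(k)}=\{0\}$'' (indeed $C_1(k)$ is still intersected with $C$ and a ball), but rather follows the argument at the end of \cite[Theorem 4.8]{ZNpaper}.
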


 \begin{remark} \label{specialcases} The following are simplified versions of  the weak maximization condition of  Theorem \ref{mpw12bis} for the special cases: $(a)$ $U(t)$ is $r$-prox-regular for all $t\in [0,1]$, $(b)$ $U(t)\cap \bar{B}_{\e_{o}}(\u(t))$ is convex for all $t\in [0,1]$, and $(c)$ $U(t)$ is convex for all $t\in [0,1]$.
\begin{enumerate}[($a$)]
\item We take $\e_{o}\f\infty$ to get that for $t\in [0,1]$ \textnormal{a.e.}\sp,
$$\textstyle  \max\left\{\left\<\oomega(t)\tran \p(t),u\right\>- \frac{\|\oomega(t)\tran \p(t)\|}{2r}\|u-\u(t)\|^2 : u\in U(t)\right\}\;\hbox{is attained at}\;\bar{u}(t).$$ 
\item We take $r\f\infty$ to get that for $t\in [0,1]$ \textnormal{a.e.}\sp,
$$ \textstyle \max\left\{\left\<\oomega(t)\tran \p(t),u\right\>- \frac{\|\oomega(t)\tran \p(t)\|}{\e_{o}}\|u-\u(t)\|^2 : u\in U(t)\right\}\;\hbox{is attained at}\;\bar{u}(t).$$ 
\item We take both $\e_{o}\f\infty$ and $r\f\infty$ to get that for $t\in [0,1]$ \textnormal{a.e.}\sp,   
\begin{equation*}\label{max}\textstyle  \max\left\{\left\<\oomega(t)\tran \p(t),u\right\> : u\in U(t) \right\}\;\hbox{is attained at}\;\bar{u}(t).\end{equation*}
\end{enumerate}
\end{remark}

\section{Proof of the main result} \label{proofmn} The proof of Theorem \ref{mpw12bis} is presented in three steps.
\subsection*{Step 1: Approximating problems for $(P)$}
 We introduce the following sequence of approximating problems: 
$$ \begin{array}{l} (P_{\gk})\colon\; \textnormal{Minimize}\\[2pt] \hspace{0.65cm}J(x,z,u):= g(x(0),x(1))+\frac{1}{2}\left(\|u(0)-\u(0)\|^2 + z(1)+\|x(0)-\x(0)\|^2\right)\\[5pt] \hspace{0.65cm}  \textnormal{over}\;(x,z,u)\in \W^{1,2}([0,1];\R^n)\times\W^{1,1}([0,1];\R)\times \mathscr{W} \;\textnormal{such that} \\[2.6pt] \hspace{1.15cm}\begin{cases} (D_{\gk}) \begin{sqcases}\dot{x}(t)={f_{\Vphi}}(t,x(t),u(t))-\gk e^{\gk\psi(x(t))} \nabla\psi(x(t)),\;\; t\in[0,1]\;\textnormal{a.e.},\\ \dot{z}(t)=\|\dot{u}(t)-\dot{\u}(t)\|^2,\;\;t\in[0,1]\;\hbox{a.e.},\\ (x(0),z(0))\in {C}_0(k)\times \{0\}, \end{sqcases}\vspace{0.1cm}\\x(t)\in \bar{B}_{\delta}(\x(t))\;\hbox{and}\; u(t)\in U(t)\cap \bar{B}_{\delta}(\u(t)),\;\;\forall t\in[0,1],\vspace{0.1cm}\\(x(1),z(1))\in {C}_1(k)\times [-\delta,\delta].\end{cases} \end{array}$$

\begin{lemma}\label{lasthopefully} For $k$ sufficiently large, the problem $(P_{\gk})$ has an optimal solution $(x_{\gk},z_{\gk},u_{\gk})$ such that,  for  $\xi_{\gk}$ defined in \eqref{defxi}, we have,  along a subsequence, we do not relabel, that
$$u_{\gk}\xrightarrow[{\mathscr{W}}]{\textnormal{strongly}}\u,\;\;\;{x}_{\gk}\xrightarrow[{C([0,1]; \R^n)}]{\textnormal{uniformly}}{\x},\;\;\;z_{\gk}\xrightarrow[{W^{1,1}([0,1];\R^+)}]{\textnormal{strongly}} 0,\;\,\hbox{and}\;\,(\dot{x}_{\gk},\xi_{\gk})\xrightarrow[{L^{\infty}([0,1]; \R^n\times \R^{+})}]{\textnormal{weakly*}}(\dot{\x},\bar{\xi}).$$  
In addition, we have\sp$:$\begin{enumerate}[$(i)$]
\item $x_{\gk}(t)\in C(k)\subset \inte C,\;\forall t\in[0,1]$.
\item $ 0\le \xi_{\gk}(t)\le \frac{2\bar{M}}{\eta},\;\forall t\in [0,1]$.
\item $ \|\dot{x}_{\gk}(t)\|\leq \bar{M}+\frac{2\bar{M}\bar{M}_\psi}{\eta},\;\forall t\in [0,1]$ \textnormal{a.e.}
\item $x_{\gk}(i)\in \big[\left(C_i\cap \bar{B}_{\delta_{o}}(\x(i))\right)+\tilde{\rho}{B}\big]\cap(\inte C)\subset \inte \tilde{C}_i(\delta),\;$ for $i=0,1.$
\end{enumerate}
\end{lemma}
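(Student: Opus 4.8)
The proof of Lemma \ref{lasthopefully} will be carried out in two phases: first, existence of an optimal solution $(x_{\gk},z_{\gk},u_{\gk})$ for each sufficiently large $k$, and second, the convergence assertions together with properties $(i)$--$(iv)$.

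For the existence phase, I would fix $k$ large enough so that $C_0(k)\neq\emptyset$ (which holds since $\bar c_{\gk}\in\inte C(k)$ by the cited \cite[Remark 3.6$(ii)$]{verachadiimp}) and so that all the ``$k$ large'' inclusions \eqref{limC0}, \eqref{normalc1c0}, \eqref{limC1} are valid; in particular $(\x_{\gk},z\equiv 0,\u)$ is feasible for $(P_{\gk})$, so the infimum is finite (it is bounded below because $g$ is $L_g$-Lipschitz on $\tilde C_0(\delta)\times\tilde C_1(\delta)$, the endpoint data live in compact sets, and the added quadratic terms are nonnegative apart from the bounded $z(1)\in[-\delta,\delta]$ term). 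Then I take a minimizing sequence $(x^j,z^j,u^j)_j$ for $(P_{\gk})$. The state bounds $x^j(t)\in\bar B_\delta(\x(t))$, $u^j(t)\in U(t)\cap\bar B_\delta(\u(t))$ give uniform $L^\infty$ bounds on $x^j,u^j$; the dynamics $(D_{\gk})$ together with $x^j(t)\in C$ (so $\psi(x^j(t))\le0$, whence $\gk e^{\gk\psi(x^j(t))}\le\gk\le$ something, or better, $x^j(t)\in C(k)$ forces $\gk e^{\gk\psi}\le\frac{2\bar M}{\eta}$ by the choice of $\a_k$ in \eqref{velo2}) give a uniform bound on $\|\dot x^j\|_\infty$; and along the minimizing sequence $\tfrac12 z^j(1)=\tfrac12\|\dot u^j-\dot{\u}\|_2^2$ stays bounded, so $(\|\dot u^j\|_2)_j$ is bounded. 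Arzela--Ascoli then extracts a subsequence with $x^j\to\tilde x$, $u^j\to\tilde u$ uniformly, $\dot x^j\xrightarrow{\text{w*}}\dot{\tilde x}$ in $L^\infty$, $\dot u^j\xrightarrow{\text{w}}\dot{\tilde u}$ in $L^2$; the constraint set $U(t)\cap\bar B_\delta(\u(t))$ is closed-valued and its graph is measurable, so $\tilde u\in\mathscr W$ with $\tilde u(t)$ in that set, and $\tilde x(t)\in\bar B_\delta(\x(t))$, $(\tilde x(0),0)\in C_0(k)\times\{0\}$ since $C_0(k)$ is closed, $(\tilde x(1),z^*)\in C_1(k)\times[-\delta,\delta]$. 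For passage to the limit in the $x$-dynamics of $(D_{\gk})$ I integrate, use uniform convergence of $x^j$ together with continuity of $\psi,\nabla\psi,f_{\Vphi}$ to pass $f_{\Vphi}(t,x^j,u^j)\to f_{\Vphi}(t,\tilde x,\tilde u)$ a.e. and $\gk e^{\gk\psi(x^j)}\nabla\psi(x^j)\to\gk e^{\gk\psi(\tilde x)}\nabla\psi(\tilde x)$ in $L^1$ by dominated convergence; for the $z$-equation, weak $L^2$-lower semicontinuity of $v\mapsto\|v-\dot{\u}\|_2^2$ gives $z^*\ge\liminf z^j(1)$, and since we only need $\dot z=\|\dot u-\dot{\u}\|^2$ with $z(0)=0$, I replace $z^*$ by the genuine $\tilde z(1):=\int_0^1\|\dot{\tilde u}-\dot{\u}\|^2\le z^*\le\delta$ — this does not increase $J$ and keeps feasibility, so $(\tilde x,\tilde z,\tilde u)$ is feasible; the lower semicontinuity of $g$ plus $\|u^j(0)-\u(0)\|^2\to\|\tilde u(0)-\u(0)\|^2$, $\|x^j(0)-\x(0)\|^2\to\|\tilde x(0)-\x(0)\|^2$, and $\tilde z(1)\le\liminf z^j(1)$ give $J(\tilde x,\tilde z,\tilde u)\le\liminf J(x^j,z^j,u^j)$, so $(\tilde x,\tilde z,\tilde u)=:(x_{\gk},z_{\gk},u_{\gk})$ is optimal.

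For the convergence phase, I exploit that $(\x_{\gk},0,\u)$ is feasible, so $J(x_{\gk},z_{\gk},u_{\gk})\le J(\x_{\gk},0,\u)=g(\x_{\gk}(0),\x_{\gk}(1))+\tfrac12\|\x_{\gk}(0)-\x(0)\|^2$. Since $\x_{\gk}\to\x$ uniformly (the cited Theorem 4.1 of \cite{verachadiimp}) and $\bar c_{\gk}\to\x(0)$, the right side tends to $g(\x(0),\x(1))$. On the other hand, $J(x_{\gk},z_{\gk},u_{\gk})\ge g(x_{\gk}(0),x_{\gk}(1))+\tfrac12\big(\|u_{\gk}(0)-\u(0)\|^2+z_{\gk}(1)+\|x_{\gk}(0)-\x(0)\|^2\big)$ with $z_{\gk}(1)=\|\dot u_{\gk}-\dot{\u}\|_2^2\ge0$. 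Extracting a further subsequence via Arzela--Ascoli as above (the uniform bounds hold with constants independent of $k$ because $\psi(x_{\gk}(t))\le0$, giving $\|\dot x_{\gk}\|_\infty$ bounded, and $z_{\gk}(1)\le\delta$ bounding $\|\dot u_{\gk}\|_2$), I get limits $x_{\gk}\to x^\sharp$ uniformly, $u_{\gk}\to u^\sharp$ in $L^\infty$ with $\dot u_{\gk}\xrightarrow{\text w}\dot u^\sharp$ in $L^2$, $\dot x_{\gk}\xrightarrow{\text{w*}}\dot x^\sharp$; exactly as in the existence argument (but now letting $k\to\infty$, using $\gk e^{\gk\psi(x_{\gk})}=\xi_{\gk}$ and the uniform bound $\xi_{\gk}\le\frac{2\bar M}{\eta}$, together with $\rho_k\to0$ so $x_{\gk}(0)=\bar c_{\gk}+(\text{small})\to\x(0)$) one shows $(x^\sharp,u^\sharp)$ solves $(D)$ with $x^\sharp(0)\in C_0\cap\bar B_{\delta_o}(\x(0))$, $x^\sharp(1)\in C\cap C_1\cap\bar B_{\delta_o}(\x(1))$ (using \eqref{limC0},\eqref{limC1}), $\|x^\sharp-\x\|_\infty\le\delta$, $\|u^\sharp-\u\|_\infty\le\delta$, and $\|\dot u^\sharp-\dot{\u}\|_2^2\le\liminf z_{\gk}(1)\le\delta$, so $(x^\sharp,u^\sharp)$ is admissible for $(P)$ within the $\C\t\W^{1,2}$-neighborhood of $(\x,\u)$. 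Now lower semicontinuity gives
$$g(x^\sharp(0),x^\sharp(1))+\tfrac12\big(\|u^\sharp(0)-\u(0)\|^2+\|\dot u^\sharp-\dot{\u}\|_2^2+\|x^\sharp(0)-\x(0)\|^2\big)\le\liminf_k J(x_{\gk},z_{\gk},u_{\gk})\le g(\x(0),\x(1)).$$
But $(\x,\u)$ is a $\C\t\W^{1,2}$-local minimizer, so $g(\x(0),\x(1))\le g(x^\sharp(0),x^\sharp(1))$; hence all three nonnegative penalty terms vanish, forcing $u^\sharp(0)=\u(0)$, $x^\sharp(0)=\x(0)$, $\dot u^\sharp=\dot{\u}$ a.e., i.e.\ $u^\sharp=\u$, and then $x^\sharp$ and $\x$ solve $(D)$ with the same control and same initial point, so by the uniqueness statement around \eqref{admissible-Pg} we get $x^\sharp=\x$. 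This identifies the limits and upgrades $u_{\gk}\to\u$ to \emph{strong} convergence in $\mathscr W$: $u_{\gk}\to\u$ uniformly is in hand, and $\|\dot u_{\gk}-\dot{\u}\|_2^2=z_{\gk}(1)\to0$ because $\liminf z_{\gk}(1)=0$ from above while $z_{\gk}(1)\ge0$ — to get the full limit (not just liminf) one argues that every subsequence has a sub-subsequence with $z_{\gk}(1)\to0$, hence $z_{\gk}(1)\to0$; this also gives $z_{\gk}\to0$ in $W^{1,1}$. Finally $\xi_{\gk}=\gk e^{\gk\psi(x_{\gk})}\to\bar\xi$ weakly* is read off from $\dot x_{\gk}=f_{\Vphi}(t,x_{\gk},u_{\gk})-\xi_{\gk}\nabla\psi(x_{\gk})\xrightarrow{\text{w*}}\dot{\x}=f_{\Vphi}(t,\x,\u)-\bar\xi\nabla\psi(\x)$ and the uniform nondegeneracy $\|\nabla\psi(x_{\gk})\|\ge2\eta$ near $\bdry C$. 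Items $(i)$--$(iv)$: $(i)$ is \cite[Theorem 4.1 / Remark 3.6$(ii)$]{verachadiimp} giving $x_{\gk}(t)\in C(k)\subset\inte C$; $(ii)$ is $\xi_{\gk}=\gk e^{\gk\psi(x_{\gk})}$ with $\psi(x_{\gk}(t))\le-\a_k$ on $C(k)$ plus \eqref{velo2bis}; $(iii)$ follows from $(ii)$, $\|f_{\Vphi}\|\le\bar M$, and $\|\nabla\psi\|\le\bar M_\psi$ on $C$; $(iv)$ combines $x_{\gk}(i)\to\x(i)$, $x_{\gk}(i)\in C_i(k)\subset\big[(C_i\cap\bar B_{\delta_o}(\x(i)))+\tilde\rho B\big]\cap\inte C$ for $k$ large by \eqref{limC0}--\eqref{limC1} and the definition of $C_i(k)$, and $\tilde C_i(\delta)$'s definition.

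\textbf{Main obstacle.} The delicate point is the \emph{uniformity of all bounds in $k$} in the convergence phase — in particular that $\|\dot x_{\gk}\|_\infty$, $\|\xi_{\gk}\|_\infty$, and $\|\dot u_{\gk}\|_2$ are bounded by constants not depending on $k$ — so that a single Arzela--Ascoli extraction produces limits, together with the subsequence-of-subsequence argument needed to promote ``$\liminf z_{\gk}(1)=0$'' to ``$z_{\gk}(1)\to 0$'' and hence to genuine strong $\mathscr W$-convergence of $u_{\gk}$; the first is handled by the prox-regularity estimates of \cite{verachadiimp} (which force $x_{\gk}(t)\in C(k)$ and thus $\xi_{\gk}\le\frac{2\bar M}{\eta}$), and the second by the local-minimality squeeze above, but wiring these together cleanly — especially ensuring the limiting pair $(x^\sharp,u^\sharp)$ lands \emph{inside} the $\delta$-neighborhood so that local minimality of $(\x,\u)$ can be invoked, which needs the strict ``$<\delta$'' room provided by $\delta_o<\delta$ and $\tilde\rho$ — is the crux of the argument.
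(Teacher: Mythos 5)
Your proposal is correct and follows essentially the same route as the paper: a direct method at fixed $k$ (minimizing sequence, Arzel\`a--Ascoli, weak lower semicontinuity of the $\dot u$-penalty, limit passage in the integral form of $(D_{\gk})$), then for $k\to\infty$ the invariance/bound results of \cite{verachadiimp} (images in $C(k)$, hence $\xi_{\gk}\le\frac{2\bar M}{\eta}$ and the $L^\infty$ bound on $\dot x_{\gk}$), admissibility of the limit pair for $(P)$ inside the $\C\t\W^{1,2}$-neighborhood, and the squeeze against the admissible triplet $(\x_{\gk},0,\u)$ to kill the penalty terms, identify $x=\x$, $u=\u$, $\xi=\xxi$ by uniqueness, and obtain $(i)$--$(iv)$. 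The only slips are cosmetic (e.g.\ the parenthetical claim that $\psi(x_{\gk}(t))\le 0$ alone bounds $\|\dot x_{\gk}\|_\infty$ uniformly in $k$, which you correct elsewhere by invoking $x_{\gk}(t)\in C(k)$), and the full-sequence upgrade of $z_{\gk}(1)\to 0$ is unnecessary since the lemma only asserts convergence along a subsequence.
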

\begin{proof} By \eqref{limC0} and \eqref{limC1}, let  $k$ be large enough so that $C_0(k)\subset \tilde{C}_0(\delta)$ and $C_1(k)\subset \tilde{C}_1(\delta)$. Since  $\x_{\gk}\f\x$ uniformly,  then, for $k$ sufficiently large,  $\x_{\gk}(t)\in \bar{B}_{\delta}(\x(t))$, $\forall t\in [0,1].$ Thus, using that $\bar{c}_{\gk}\in C_0(k)$, for all $k\in \N$,  and $\x(1)\in C_1\cap \bar{B}_{\delta_{o}}(\x(1))$,  it follows that, for $k$ large,  $ (\x_{\gk},\z_{\gk}:=0,\u)$ is an admissible triplet for $(P_{\gk})$. 

Now, fix $k$ large enough so that $C_0(k)\times C_1(k) \subset \tilde{C}_0(\delta) \times \tilde{C}_1(\delta)$ and $(\x_{\gk},0,\u)$ is admissible for $(P_{\gk})$. Using (H5) and the definition of $J(x,z,u)$, we obtain that  $J(x,z,u)$ is bounded from below, and  hence, $\inf (P_{\gk})$ is {\it finite}. Let $(x_{\gk}^n,z^n_{\gk},u^n_{\gk})_n\in \W^{1,2}([0,1];\R^n)\times\W^{1,1}([0,1];\R)\times \mathscr{W} $ be a minimizing sequence for $(P_{\gk})$, that is,  for each $n\in \N$, $(x_{\gk}^n,z^n_{\gk},u^n_{\gk})$ is admissible for $(P_{\gk})$, and 
\begin{equation} \label{chadin1} \lim_{n\f\infty} J(x_{\gk}^n,z^n_{\gk},u^n_{\gk})= \inf (P_{\gk})<\infty.
\end{equation}
Since for each $n$, $x_{\gk}^n$ solves $(D_{\gk})$ for $(x_{\gk}^n(0),u^n_{\gk})$, and $(x_{\gk}^n(0))_n \in C_0(k)\subset C$, then, by \cite[Lemma 4.1]{verachadiimp}, we have that the sequence $(x_{\gk}^n)_n$ is uniformly bounded in $\C([0,1];\R^n)$ and the sequence $(\dot{x}_{\gk}^n)_n$ is uniformly bounded in $L^2$. On the other hand,  from (H4.2), we have that  the sets $U(t)$ are compact and uniformly bounded, then, the sequence $(u^n_{\gk})_n$, which is in $\mathscr{W}$, is uniformly bounded in $\C([0,1];\R^m)$. Moreover, its derivative  sequence, $(\dot{u}^n_{\gk})_n$, must be uniformly bounded in $L^2$, since we have 
 $z^n_{\gk}(t)= \int_0^t \|\dot{u}^n_{\gk}(\tau)-\dot{\u}(\tau)\|^2\sp d\tau, \;\; \forall t\in[0,1],$
 and hence, 
 \begin{equation}\label{dotu-bdd}\|\dot{u}^n_{\gk}\|_2  \le  \|\dot{u}^n_{\gk}-\dot{\u}\|_2 +\|\dot{\u}\|_2= (z_{\gk}^n(1))^{\frac{1}{2}}+ \|\dot{\u}\|_2\le \sqrt{\delta} + \|\dot{\u}\|_2.
 \end{equation}
Therefore, by Arzel\`a-Ascoli theorem, along a subsequence (we do not relabel), the sequence $(x_{\gk}^n, u_{\gk}^n)_n$ converges  {\it uniformly} to a pair $(x_{\gk}, u_{\gk})$  and the sequence $(\dot{x}_{\gk}^n, \dot{u}_{\gk}^n)_n$ converges {\it weakly} in $L^2$ to the pair $(\dot{x}_{\gk},\dot{u}_{\gk})$. Hence, $(x_{\gk}, u_{\gk})\in  \W^{1,2}([0,1];\R^n)\times \mathscr{W}$. Moreover, we have \begin{equation}\label{uw12xw12} \|\dot{u}_{\gk}-\dot{\u}\|_2^2\leq \liminf\limits_{n\f \infty} \|\dot{u}^n_{\gk}-\dot{\u}\|_2^2.\end{equation}
We define 
\begin{equation}\label{zgk}z_{\gk}(t):= \int_0^t \|\dot{u}_{\gk}(\tau)-\dot{\u}(\tau)\|^2\sp d\tau, \;\; \forall t\in[0,1].
\end{equation}
We claim that $(x_{\gk},z_{\gk},u_{\gk})$ is optimal for $({P}_{\gk})$. First we prove its admissibility. Clearly we have $$z_{\gk}\in \W^{1,1}([0,1];\R),\;\;\dot{z}_{\gk}(t)=\|\dot{u}_{\gk}(t)-\dot{\u}(t)\|^2\;\;\forall t\in[0,1] \hbox{ a.e.,\;and} \;\; z_{\gk}(0)=0.$$
Moreover, since $\|\dot{u}^n_{\gk}-\dot{\u}\|_2^2= z^n_{\gk}(1)\in [-\delta,\delta]$, \eqref{uw12xw12} yields that \begin{equation} \label{ykandzk1}z_{\gk}(1)\in [-\delta,\delta].\end{equation}
The inclusions $(x_{\gk}(0), x_{\gk}(1))\in C_0(k)\times C_1(k)$, and  $x_{\gk}(t)\in \bar{B}_{\delta}(\x(t))$  and $u_{\gk}(t)\in U(t)\cap \bar{B}_{\delta}(\u(t))$, for all $t\in[0,1]$, follow directly from $C_0(k)$, $C_1(k)$, $\bar{B}_{\delta}(\x(t))$ and $U(t)\cap \bar{B}_{\delta}(\u(t))$ being closed for all $t\in[0,1]$, and from the uniform convergence, as $n\f\infty$, of the sequence $(x_{\gk}^n, u_{\gk}^n)$ to $(x_{\gk}, u_{\gk})$.  To prove that  $x_{\gk}$ is the solution of  $(D_{\gk})$ corresponding to  $(x_{\gk}(0),u_{\gk})$,     take the limit, as $n\f\infty$, in this integral form of the admissible equation in  $(D_{\gk})$ for  $(x_{\gk}^n, u^n_{\gk})$,
$$ {x}_{\gk}^n(t) =  x_{\gk}^n(0)+\int_0^t \left[{f}_\Vphi(s,x_{\gk}^n(s),u^n_{\gk}(s))- \gk e^{\gk\psi(x_{\gk}^n(s))} \nabla\psi(x_{\gk}^n(s))\right]\,ds,\;\; \forall \; t\in[0,1], $$
and use  that  $(x_{\gk}^n(t), u_{\gk}^n(t))\in [C\cap \bar{B}_{\delta}(\x(t))]\times [U(t)\cap \bar{B}_{\delta}(\u(t))]$,   $(x_{\gk}^n, u^n_{\gk})$  converges uniformly to  $(x_{\gk}, u_{\gk})$,  (H1) and (H2.1) hold,   and that $\Vphi$ is $\CO^{1}$, we conclude  that $(x_{\gk},u_{\gk})$  satisfies the same equation, that is,
 $$\dot{x}_{\gk}(t)= {f}_\Vphi(t,x_{\gk}(t),u_{\gk}(t))- \gk e^{\gk\psi(x_{\gk}(t))} \nabla\psi(x_{\gk}(t)),\;\;t\in[0,1]\;\textnormal{a.e.}$$
This terminated the proof of the admissibility of $(x_{\gk},z_{\gk},u_{\gk})$ for $({P}_{\gk})$. For its optimality, use (\ref{chadin1}) and the uniform convergence of $(x_{\gk}^n, u_{\gk}^n)$ to $(x_{\gk}, u_{\gk})$, it follows that \begin{eqnarray*}\inf (P_{\gk}) &=& \lim_{n\f\infty} J(x_{\gk}^n,z^n_{\gk},u^n_{\gk})\\[3pt] & =& \lim_{n\f\infty}\left(\bbp g(x^n_{\gk}(0),x^n_{\gk}(1))\bbp +\bbp \frac{1}{2}\left(\|u^{n}_{\gk}(0)-\u(0)\|^2 \bbp +\bbp  \|\dot{u}_{\gk}^n-\dot{\u}\|_2^2\bbp +\bbp \|x^n_{\gk}(0)-\x(0)\|^2\right)\bp\right)\\[3pt]&=& g(x_{\gk}(0),x_{\gk}(1))\bbp+\bbp\frac{1}{2}\|u_{\gk}(0)-\u(0)\|^2 \bbp+\bbp\frac{1}{2}\liminf_{n\f\infty}\|\dot{u}_{\gk}^n\bbp-\bbp\dot{\u}\|_2^2 \bbp+\bbp\frac{1}{2}\|x_{\gk}(0)\bbp-\bbp\x(0)\|^2\\[3pt] &\geq& g(x_{\gk}(0),x_{\gk}(1))+\frac{1}{2}\|u_{\gk}(0)-\u(0)\|^2 +\frac{1}{2}\|\dot{u}_{\gk}-\dot{\u}\|_2^2 +\frac{1}{2}\|x_{\gk}(0)-\x(0)\|^2\\[3pt] &=& J(x_{\gk},z_{\gk},u_{\gk}).
 \end{eqnarray*}
Therefore, for each such  $k$, $(x_{\gk},z_{\gk},u_{\gk})$ is an optimal solution for $(P_{\gk})$.

For the convergence of $(x_{\gk},z_{\gk},u_{\gk})_k$ when $k\f \infty$, we  first note that the sequence  $(u_{\gk})_k$ in $\mathscr{W}$  has uniformly bounded derivative in $L^2$. This follows  from using \eqref{zgk} and \eqref{ykandzk1} to obtain that \eqref{dotu-bdd} also holds when   $(u^n_{\gk},z^n_{\gk})$ is replaced by $(u_{\gk},z_{\gk})$. Hence, using arguments similar to those used above for the sequence $(u_{\gk}^n)_n$, we obtain the existence of $u\in \mathscr{W}$ such that, along a subsequence not relabled, $u_{\gk}$ converges uniformly to $u$, $\dot{u}_{\gk}$ converges weakly in $L^2$ to $\dot{u}$, and \begin{equation}\label{uw12bis} \|\dot{u}-\dot{\u}\|_2^2\leq \liminf_{k\f \infty} \|\dot{u}_{\gk}-\dot{\u}\|_2^2. \end{equation}
On the other hand, by \eqref{normalc1c0} we have $C_0(k)\subset C$, for $k$ large, then,  by \cite[Theorem 4.1 \& Lemma 4.2]{verachadiimp}, the sequence $(x_{\gk}, \xi_{\gk})_k$, where $\xi_{\gk}$ is given via \eqref{defxi},  admits  a subsequence, not relabled, such that $(x_{\gk})_k$ converges uniformly to some $x\in \W^{1,2}([0,1];\R^n)$  with images in $C$,  $(\dot{x}_{\gk}, \xi_{\gk})_k$ converges weakly in $L^2$ to $(\dot{x}, \xi)$, and  $\xi$ is supported on $I^0(x)$. Furthermore, $(x, u,\xi)$ satisfies \eqref{admissible-Pg}-\eqref{boundxig} and $x$  uniquely  solves   $(D)$ for $(x(0),u)$. Now, as
 $x_{\gk}(0)\in C_0(k)$, for $k$ large, equation \eqref{limC0}(a), implies  that $x(0)\in C_0\cap \bar{B}_{\delta_o}(\x(0))$. Hence,  $(x,u,\xi)$ satisfies
$$\begin{cases} \dot{x}(t)= f_\Vphi(t,x(t),u(t))-\xi(t) \nabla\psi(x(t))\in{f}(t,x(t),u(t))- \partial\varphi (x(t))\;\;\textnormal{a.e.}\; t\in[0,1],\\x(0)\in C_0\cap \bar{B}_{\delta_{o}}(\x(0)). \end{cases} $$
Since $x_{\gk}(1)\in C_1(k) $ for $k$ large, equation \eqref{limC1}(a)  implies that $x(1)\in  C_1\cap \bar{B}_{\delta_{o}}(\x(0))$. Furthermore, having, for all $t\in[0,1]$, that $x_{\gk}(t)\in \bar{B}_{\delta}(\x(t))$ and  $u_{\gk}(t)\in U(t)\cap \bar{B}_{\delta}(\u(t))$,  and that  $(x_{\gk},u_{\gk})$ converges uniformly to $(x,u)$, we get $x(t)\in \bar{B}_{\delta}(\x(t))$ and  $u(t)\in U(t)\cap \bar{B}_{\delta}(\u(t))$, for all  $t\in[0,1]$. In addition, we have that $$\|\dot{u}-\dot{\u}\|^2_2\overset{\eqref{uw12bis}}{\leq}\liminf_{k\f\infty}\|\dot{u}_{\gk}-\dot{\u}\|^2_2=\liminf_{k\f\infty}z_{\gk}(1)\overset{\eqref{ykandzk1}}{\in}[-\delta,\delta],$$
proving  that $(x,u)$ is admissible for $({P})$. Thus,  by the local optimality of $(\x,\u)$ for $({P})$, we have that
\begin{equation}\label{mp2} g(\x(0),\x(1))\leq g(x(0),x(1)).	
 \end{equation} 
Now by   using the admissibility of $(\x_{\gk},0,\u)$ and the optimality of $(x_{\gk},z_{\gk},u_{\gk})$ for $({P}_{\gk})$, it follows that \begin{equation}\label{newimport2} J(x_{\gk},z_{\gk},u_{\gk})\leq g(\x_{\gk}(0),\x_{\gk}(1))+\frac{1}{2}\|\x_{\gk}(0)-\x(0)\|^2.\end{equation}
Hence, using the uniform convergence of $\x_{\gk}$ to $\x$, (\ref{newimport2}), (\ref{mp2}), the Lipschitz continuity of $g$,  and  the uniform convergence of $x_{\gk}$ to $x$, we get 
\begin{eqnarray*} g(x(0),x(1)) \hspace{-0.25cm} &\leq & \hspace{-0.25cm} \liminf_{k\f \infty}\left(\bbp g(x_{\gk}(0),x_{\gk}(1))\bbp+\bbp\frac{1}{2}\left(\bbp\|u_{\gk}(0)-\u(0)\|^2\bbp +\bbp \|\dot{u}_{\gk}-\dot{\u}\|_2^2\bbp+\bbp\|x_{\gk}(0)-\x(0)\|^2\bbp\right)\bp\bbp\right)\bp \\& =& \hspace{-0.25cm} \liminf_{k\f \infty} J(x_{\gk},z_{
\gk},u_{\gk}) \\&\leq  &\hspace{-0.25cm} \liminf_{k\f \infty} \left(g(\x_{\gk}(0),\x_{\gk}(1))+\frac{1}{2}\|\x_{\gk}(0)-\x(0)\|^2\right) \bp =\bbp g(\x(0),\x(1)) \leq g(x(0),x(1)).\end{eqnarray*} 
Thus \begin{equation} \label{sobolev} u(0)=\u(0)  \;\; \text{and}\;\; \liminf_{k\f \infty}\left(\ \|\dot{u}_{\gk}-\dot{\u}\|_2^2\right)=0,\;\;\hbox{and}\end{equation}  \begin{equation}\label{sobolevbis} x(0)=\x(0)\;\;\hbox{and}\;\;g(\x(0),\x(1))=g(x(0),x(1)).\end{equation}
The equality (\ref{sobolev}) gives the existence of a subsequence of $u_{\gk}$, we do not relabel, such that  $\dot{u}_{\gk}$ converges {\it strongly }in $L^2$ to $\dot{\u}$. It results that $u_{\gk}$ converges uniformly to $\u$, and hence, $u=\u$. Consequently, $$u_{\gk}\xrightarrow[{\mathscr{W}}]{\textnormal{strongly}}\u.$$
This yields that $z_{\gk}\f 0 $ in the strong topology of ${W^{1,1}([0,1];\R^+)}$. Moreover, as $u=\u$, then the functions   $x$ and $\x$ solve the dynamic $({D})$ with the same control $\bar{u}$ and  initial condition, see (\ref{sobolevbis}), hence,   by the uniqueness of the solution of $({D})$, we have $x=\x$. Using \eqref{boundxig}, we obtain that also $\xi=\bar{\xi}$. Therefore, $${x}_{\gk}\xrightarrow[{C([0,1]; \R^n)}]{\textnormal{uniformly}}{\x}\;\;\hbox{and}\;\;(\dot{x}_{\gk}, \xi_{\gk})\xrightarrow[{L^2([0,1]; \R^n\times \R^{+})}]{\textnormal{weakly}}(\dot{\x},\bar{\xi}).$$ 
 
 As $x_{\gk}(0)\in C_0(k)$, we have that $x_{\gk}(0)\in C(k)$ for $k$ sufficiently large. Hence using \cite[Theorem 5.1]{verachadiimp}, we obtain that the conditions $(i)$-$(iii)$ of the \enquote{In addition} part, hold true. This implies that a subsequence, we do not relabel, of $(\dot{x}_{\gk}, \xi_{\gk})$ also converges  weakly* in $L^{\infty}([0,1], \R^{n+1})$ to $(\dot{\x},\bar{\xi})$.   
 Moreover, since $x_{\gk}(1)\in\left[\left(C_1\cap \bar{B}_{\delta_{o}}(\x(1))\right)-\x(1)+\x_{\gk}(1)\right]\cap (\inte C)$ and $\x_{\gk}(1)$ converges to $ \x(1)$,  it follows that  $x_{\gk}(1)\in \big[\left(C_1\cap \bar{B}_{\delta_{\bbbp o}}(\x(1))\right)+\tilde{\rho}{B}\big]\cap(\inte C)$, for $k$ sufficiently large. On the other hand, the definition of $C_0(k)$ and the convergence of  $\rho_{k}$ to  $0$ yield that, for $k$  large enough,  $x_{\gk}(0)\in \big[\left(C_0\cap \bar{B}_{\delta_{o}}(\x(0))\right)+\tilde{\rho}{B}\big]\cap(\inte C)$. \end{proof}

\subsection*{Step 2: Maximum principal for the approximation problems} We proceed and we  rewrite the approximating problems $(P_{\gk})$ as a {\it standard} optimal control problem with state constraints in which   the control $u$, which is in $\W^{1,2}$, is considered as  another state variable and its derivative, $v:=\dot{u}$ is the control. For $\bar{v}:=\dot{\u}$, the problem $(P_{\gk})$ is reformulated in the following manner:
$$ \begin{array}{l} (P_{\gk})\colon\; \hbox{Minimize}\;\\[2pt]  \hspace{0.6cm}J(x,z,u,v):=g(x(0),x(1))+\frac{1}{2}\left(\|u(0)-\u(0)\|^2 + \|x(0)-\x(0)\|^2 +z(1)\right)\\[2pt] \hspace{0.6cm} \hbox{over}\;(x,z,u)\in \W^{1,1}([0,1];\R^n)\times \W^{1,1}([0,1];\R)\times \W^{1,1}([0,1];\R^m)\\[1pt] \hspace{0.55cm} \;\hbox{and measurable functions}\; v\colon[0,1]\f\R^m\;\hbox{such that}\\[3pt] \hspace{1.2cm} 
\begin{cases}\dot{x}(t)={f}_\Vphi(t, x(t),u(t))-\gk e^{\gk\psi(x(t))} \nabla\psi(x(t)),\;\, t\in[0,1]\;\textnormal{a.e.},\vspace{0.07cm}\\
\dot{u}(t)= v(t), \;\, t\in[0,1]\;\textnormal{a.e.},\vspace{0.07cm}\\
\dot{z}(t)=\|v(t)-\bar{v}(t)\|^2,\;\;t\in[0,1]\;\hbox{a.e.},
\\x(t)\in \bar{B}_{\delta}(\x(t))\;\hbox{and}\; u(t)\in U(t)\cap \bar{B}_{\delta}(\u(t)),\;\;\forall t\in[0,1],\\[1pt] 
(x(0), u(0),z(0))\in C_0(k)\times \R^m\times\{0\},\\[1pt]
(x(1), u(1),z(1))\in C_1(k)\times \R^m\times [-\delta,\delta].\end{cases}
 \end{array}$$ 

In the following lemma we apply to the above sequence of reformulated problems $(P_{\gk})$, where $k$ is as large as in Lemma \ref{lasthopefully},  the nonsmooth Pontryagin maximum principle for standard optimal control problems with {\it implicit} state constraints (see e.g., \cite[Theorem 9.3.1]{vinter} and \cite[p.332]{vinter}). For this purpose, $(x,z,u)$ is  the state function in $(P_{\gk})$ and $v$ is the control. Thus,  $(x_{\gk},z_{\gk},u_{\gk})$ is the optimal state, where $(x_{\gk},u_{\gk})$ is obtained from Lemma \ref{lasthopefully} and $z_{\gk}(t):=\int_0^t \|\dot{u}_{\gk}(s)-\dot{\u}(s)\|^2\;ds$, and $v_{\gk}=\dot{u}_{\gk}$ is the optimal control.

\begin{lemma} \label{mpw12aprox} For  $k$ large enough, there exist $\l_{\gk}\geq 0$, $p_{\gk}\in \W^{1,1}([0,1];\R^n)$, $q_{\gk}\in \W^{1,1}([0,1];\R^m)$,  $\Omega_{\gk} \in NBV([0,1];\R^m)$, $\mu^{o}_{\gk}\in \C^{\oplus}([0,1];\R^{m})$, and a $\mu_{\gk}^{o}$-integrable function $\beta_{\gk}\colon [0,1]\f\R^m$  such that  $\Omega_{\gk}(t)= \int_{[0,t]} \beta_{\gk}(s)\mu^{o}_{\gk}(ds)$, for all $t\in(0,1]$, and\sp$:$
\begin{enumerate}[$(i)$]
\item {\bf (The nontriviality condition)} For all $k\in\N$, we have $$\|p_{\gk}(1)\|+ \|q_{\gk}\|_\infty + \|\mu^{o}_{\gk}\|\TV + \l_{\gk}=1;$$
\item {\bf (The adjoint equation)} For \textnormal{a.e.} $t\in [0,1],$ 
\begin{eqnarray}\nonumber\left(
\begin{array}{c}
\dot{p}_{\gk}(t)\vspace{0.2cm}\\
\dot{q}_{\gk}(t)\\
\end{array}
\right)\in &-&\left(\partial^{\sp (x,u)} {f}_{\Vphi}(t,x_{\gk}(t),u_{\gk}(t))\right)\tran p_{\gk}(t)\\[3pt] &+& \label{adjapp} \left(
\begin{array}{c}
\gk e^{\gk \psi(x_{\gk}(t))}\partial^2\psi(x_{\gk}(t))p_{\gk}(t)\vspace{0.2cm}\\
0\\
\end{array}\right)\\[3pt] &+& \left(
\begin{array}{c}
\gk^2 e^{\gk \psi(x_{\gk}(t))}\nabla\psi(x_{\gk}(t))\<\nabla\psi(x_{\gk}(t)),p_{\gk}(t)\>\vspace{0.2cm}\\
0\\
\end{array}
\right);\nonumber\end{eqnarray}
\item {\bf (The transversality equation)} 
$$(p_{\gk}(0),-p_{\gk}(1))\in $$
\begin{equation*}\hspace{-0.4cm}\l_{\gk}\partial^L g(x_{\gk}(0),x_{\gk}(1))\bbp+\bbp\big[\big(\l_{\gk}(x_{\gk}(0)\bbp-\bbp\x(0)) \bbp+\bbp N_{C_0(k)}^L(x_{\gk}(0))\big) \bp \times \bp N_{C_1(k)}^L(x_{\gk}(1))\big],\vspace{0.2cm}\end{equation*} 
\hspace{-0.6cm} and \begin{equation*} q_{\gk}(0)=\l_{\gk}(u_{\gk}(0)-\u(0)),\;\;\;-q_{\gk}(1)=\Omega_{\gk}(1); \end{equation*}
\item {\bf (The maximization condition)} For \textnormal{a.e.} $t\in [0,1],$ \begin{equation*} \max_{v\in\R^m} \left\{\<q_{\gk}(t) +\Omega_{\gk}(t),v\>-\frac{\l_{\gk}}{2}\|v-\dot{\bar{u}}(t)\|^2\right\}  \;\hbox{is attained at}\;  \dot{u}_{\gk}(t);\end{equation*}   
\item {\bf (The measure properties)}
$$\supp\{\mu_{\gk}^{o}\}\subset  \left\{t\in [0,1] : (t,u_{\gk}(t))\in \bdry\Gr\left[U(t)\cap \bar{B}_{\delta}(\u(t))\right]\right\}, \;\; \mbox{and} $$ 
$$\beta_{\gk}(t)\in \partial_u^{\sp>} d(u_{\gk}(t),U(t)\cap \bar{B}_{\delta}(\u(t))) \;\;\;\mu_{\gk}^{o}\;\textnormal{a.e.,}$$
with  $\partial_u^{\sp>} d(u_{\gk}(t),U(t)\cap \bar{B}_{\delta}(\u(t))) \subset \left[\left(\conv\bar{N}^L_{U(t)\cap \bar{B}_{\delta}(\u(t))}(u_{\gk}(t))\right)\cap \left(\bar{B}\setminus\{0\}\right)\right]$.\end{enumerate}
\end{lemma}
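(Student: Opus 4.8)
The plan is to read $(P_{\gk})$, in the reformulated form displayed above, as a \emph{standard} Mayer problem with implicit state constraints and to apply to it \cite[Theorem 9.3.1]{vinter} together with the implicit-state-constraint treatment of \cite[p.~332]{vinter}: the state is the triple $(x,u,z)$, the control is $v=\dot u$ ranging over $\R^m$, the dynamics is the displayed triple of differential equations, the endpoint set is $\big(C_0(k)\times\R^m\times\{0\}\big)\times\big(C_1(k)\times\R^m\times[-\delta,\delta]\big)$, and the pathwise constraints are $x(t)\in\bar B_{\delta}(\x(t))$ and $u(t)\in U(t)\cap\bar B_{\delta}(\u(t))$ for all $t\in[0,1]$; the reference point is the optimal triple $(x_{\gk},z_{\gk},u_{\gk})$ of Lemma \ref{lasthopefully} (with $v_{\gk}:=\dot{u}_{\gk}$). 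The first task is to verify, for $k$ large, the hypotheses of that theorem: measurability in $t$ and global Lipschitzness and boundedness of $(x,u)\mapsto f_{\Vphi}(t,x,u)$ come from (H1) and Remark \ref{assumpf}; by Lemma \ref{lasthopefully}$(i)$ the reference arc stays in $\inte C$, so by (H2.1)--(H2.2) the term $x\mapsto\gk e^{\gk\psi(x)}\nabla\psi(x)$ is $\CO^{1,1}$ in $x$ near it with modulus independent of $t$; the dependence on $v$ is polynomial, and the \emph{local} $L^1$ velocity bound near $(\dot{x}_{\gk},\dot{u}_{\gk},\dot{z}_{\gk})$ holds because $v_{\gk}\in L^2$ forces $\|v_{\gk}-\dot{\u}\|^2\in L^1$; (H4.2) and (H4.4) make $u(t)\in U(t)\cap\bar B_{\delta}(\u(t))$ an admissible implicit state constraint; and (H5) with Lemma \ref{lasthopefully}$(iv)$ give the Lipschitzness of $g$ near $(x_{\gk}(0),x_{\gk}(1))$.

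Next I would locate the active constraints. By Lemma \ref{lasthopefully} and the uniform convergences $x_{\gk}\f\x$, $u_{\gk}\f\u$ and $z_{\gk}(1)\f0$, for $k$ large one has $x_{\gk}(t)\in\inte\bar B_{\delta}(\x(t))$ for all $t$ and $z_{\gk}(1)\in(-\delta,\delta)$, so the pathwise constraint on $x$ and the endpoint constraint on $z(1)$ are inactive; hence the measure attached to the $x$-constraint vanishes (so the $x$-costate is a $\W^{1,1}$ function $p_{\gk}$), while the $z$-costate is constant and, as $z$ enters the cost only through $\tfrac12 z(1)$, equals $-\tfrac{\l_{\gk}}{2}$ identically. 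Only the constraint $u(t)\in U(t)\cap\bar B_{\delta}(\u(t))$ is genuinely binding; treated implicitly, it yields a nonnegative scalar Radon measure $\mu^{o}_{\gk}$ supported on $\{t:(t,u_{\gk}(t))\in\bdry\Gr[U(t)\cap\bar B_{\delta}(\u(t))]\}$ and, via the $NBV$ representation of the costate's jump part, a function $\Omega_{\gk}(t)=\int_{[0,t]}\beta_{\gk}(s)\,\mu^{o}_{\gk}(ds)$ ($t\in(0,1]$) with $\beta_{\gk}(t)\in\partial_u^{\sp>}d(u_{\gk}(t),U(t)\cap\bar B_{\delta}(\u(t)))$ $\mu^{o}_{\gk}$-a.e.; the inclusion $\partial_u^{\sp>}d(u_{\gk}(t),U(t)\cap\bar B_{\delta}(\u(t)))\subset\big(\conv\bar N^L_{U(t)\cap\bar B_{\delta}(\u(t))}(u_{\gk}(t))\big)\cap(\bar B\setminus\{0\})$ is immediate from the definition of $\partial^{\sp>}_u d$ in the Basic notations and \cite[p.~121]{clarkeold}. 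This produces $(v)$ and the subdifferential inclusions of the lemma's preamble.

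The remaining items are read off the Hamiltonian $H(t,x,u,z,p,q,p^z,v)=\<p,f_{\Vphi}(t,x,u)-\gk e^{\gk\psi(x)}\nabla\psi(x)\>+\<q,v\>+p^z\|v-\dot{\u}(t)\|^2$ with $p^z\equiv-\tfrac{\l_{\gk}}{2}$. Maximizing $H$ over $v\in\R^m$ with the full $u$-costate $q_{\gk}(t)+\Omega_{\gk}(t)$ in place of $q$, the $v$-independent part drops out and $(iv)$ results. For the adjoint inclusion, the sum and chain rules give
\[\partial_x\big[\<p_{\gk},f_{\Vphi}(t,x,u)-\gk e^{\gk\psi(x)}\nabla\psi(x)\>\big]\subset(\partial_x f_{\Vphi})\tran p_{\gk}-\gk^2 e^{\gk\psi(x)}\nabla\psi(x)\<\nabla\psi(x),p_{\gk}\>-\gk e^{\gk\psi(x)}\,\partial^2\psi(x)\,p_{\gk},\]
where $\CO^{1,1}$-ness of $\psi$ (H2.1) turns the Hessian of $\psi$ into the Clarke generalized Hessian $\partial^2\psi$, while $\partial_u[\,\cdot\,]\subset(\partial_u f_{\Vphi})\tran p_{\gk}$ (here $\partial_x f_{\Vphi}$ and $\partial_u f_{\Vphi}$ denote the $x$- and $u$-blocks of the Clarke Jacobian $\partial^{\sp(x,u)}f_{\Vphi}$); since $-(\dot{p}_{\gk},\dot{q}_{\gk})\in\partial_{(x,u)}[\,\cdot\,]$, this is exactly \eqref{adjapp}. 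The endpoint conditions of the theorem give, at $t=0$, $p_{\gk}(0)\in\l_{\gk}\partial^L_{x(0)}[g+\tfrac12\|\cdot-\x(0)\|^2]+N^L_{C_0(k)}(x_{\gk}(0))$ and $q_{\gk}(0)=\l_{\gk}(u_{\gk}(0)-\u(0))$, and, at $t=1$, $-p_{\gk}(1)\in\l_{\gk}\partial^L_{x(1)}g+N^L_{C_1(k)}(x_{\gk}(1))$ and $-(q_{\gk}(1)+\Omega_{\gk}(1))\in N^L_{\R^m}(u_{\gk}(1))=\{0\}$; packaging the $x$-parts into $\partial^L g(x_{\gk}(0),x_{\gk}(1))$ (the smooth term splitting off additively) yields $(iii)$. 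Finally, the nontriviality of the multiplier tuple supplied by the theorem, after discarding the trivial $z$-costate and the vanishing $x$-measure and normalizing, reads $\|p_{\gk}(1)\|+\|q_{\gk}\|_\infty+\|\mu^{o}_{\gk}\|\TV+\l_{\gk}=1$, i.e.\ $(i)$.

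I expect the main obstacle to be the very first task: confirming that $(P_{\gk})$ — with \emph{unbounded} control $v\in\R^m$, quadratic $z$-dynamics, and only measurable $t$-dependence in $f_{\Vphi}$ and $\dot{\u}$ — genuinely falls within \cite[Theorem 9.3.1]{vinter} and the implicit-state-constraint version of \cite[p.~332]{vinter}. What rescues this is that the theorem asks only for a \emph{local} $L^1$ velocity bound and $L^1$ Lipschitz-in-state modulus near the reference velocity: the former holds since $v_{\gk}\in L^2$ gives $\|v_{\gk}-\dot{\u}\|^2\in L^1$, and the latter since, by Lemma \ref{lasthopefully}, along the reference arc the exponential penalization term is $\CO^{1,1}$ in $x$ with a $t$-uniform modulus. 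Once these are in hand, what remains — the subdifferential computations for $H$ and the componentwise matching above — is routine.
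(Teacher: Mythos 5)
Your overall route is the paper's: reformulate $(P_{\gk})$ with state $(x,z,u)$ and control $v=\dot u$, apply Vinter's state-constrained maximum principle (Theorem 9.3.1 and its implicit/multiple state-constraint form) at $(x_{\gk},z_{\gk},u_{\gk})$, observe that the $x$-constraint and the $z(1)$-constraint are inactive for large $k$ (so the corresponding measure is null and the $z$-costate is $-\l_{\gk}/2$), and read off $(i)$--$(v)$. However, there is a genuine gap at the point you dismiss as ``immediate'': the inclusion $\partial_u^{\sp>} d(u_{\gk}(t),U(t)\cap \bar{B}_{\delta}(\u(t)))\subset \big(\conv\bar{N}^L_{U(t)\cap \bar{B}_{\delta}(\u(t))}(u_{\gk}(t))\big)\cap(\bar{B}\setminus\{0\})$. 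The containment in $\conv\bar{N}^L\cap\bar B$ does follow from the definition, but the exclusion of $0$ does not: $\partial_u^{\sp>}d$ is a \emph{convex hull} of limits of unit proximal normals, and such a hull contains $0$ as soon as opposite limiting directions occur. Ruling this out is exactly what the constraint qualification (pointedness of $\conv\bar{N}^L$) delivers, and hypothesis (H4.5) gives it only at $\u$, not at $u_{\gk}$, and only for $U(\cdot)$, not for $U(\cdot)\cap\bar{B}_{\delta}(\u(\cdot))$. The paper's proof spends its first part precisely on this transfer: a contradiction argument via \cite[Proposition 2.3]{lowen91}, the uniform convergence $u_{\gk}\f\u$, and the closedness of the graph of $(t,x)\mapsto\partial_u^{\sp>}d(x,U(t))$ show that (CQ) holds at $u_{\gk}$ for $k$ large, and a further (local) argument passes it to the intersected multifunction $U(\cdot)\cap\bar{B}_{\delta}(\u(\cdot))$ (and checks it for $\bar{B}_{\delta}(\x(\cdot))$ as well). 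Without this, your property $(v)$ — and hence the nondegenerate form of the measure term used later — is unjustified. You also omit the verification that $t\mapsto U(t)\cap\bar{B}_{\delta}(\u(t))$ and $t\mapsto\bar{B}_{\delta}(\x(t))$ are lower semicontinuous, which is needed both for $\partial_u^{\sp>}d$ to be the right object and for the state-constraint functions $h_i$ to satisfy the hypotheses of the theorem.

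A smaller but real omission concerns the nontriviality condition $(i)$: Vinter's theorem normalizes a tuple involving $\|p_{\gk}\|_{\infty}$ (or the nonvanishing of the full multiplier), not $\|p_{\gk}(1)\|$. To replace $\|p_{\gk}\|_\infty$ by $\|p_{\gk}(1)\|$ one needs the estimate $\|p_{\gk}(t)\|\le M_1\|p_{\gk}(1)\|$, which the paper obtains from the linearity in $p_{\gk}$ of the adjoint equation (a Gronwall-type argument, as in Step 2 of the proof of \cite[Theorem 5.1]{verachadi}); your ``normalizing'' step silently assumes this. The remaining items of your argument (the null $x$-measure, the constant $z$-costate, the Hamiltonian computations giving $(ii)$--$(iv)$, and the $NBV$ normalization of $\Omega_{\gk}$) match the paper's proof.
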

\begin{proof} We intend to apply to  the optimal solution, $\left((x_{\gk},u_{\gk},z_{\gk}), v_{\gk}\right)$, of the reformulated $(P_{\gk})$, the {\it multiple state constraints} maximum principle \cite[p.331]{vinter} in which $$(h_1(t,x,u), h_2(t,x,u)):=  \left(d(x,\bar{B}_{\delta}(\x(t))), d(u,U(t)\cap \bar{B}_{\delta}(\u(t)))\right).$$

First, we show that the constraint qualification (CQ) that holds for $U(\cdot)$ at $\u$, also holds true at $u_{\gk}$, for $k$ large enough. Indeed, if this is false, then, by \cite[Proposition 2.3]{lowen91}, there exist an increasing sequence $(k_n)_n$ in $\N$ and a sequence $t_n\in [0,1]$ such that $t_n\f t_o\in [0,1]$ and \begin{equation} \label{CQ1} 0\in\partial_u^{\sp >}d(u_{\gkn}(t_n),U(t_n)),\;\;\forall n\in\N.\end{equation} The continuity of $\u$ and the uniform convergence of $u_{\gkn}$ to $\u$ yield that the sequence $(u_{\gkn}(t_n))_n$ converges to $\u(t_o)$. Hence, using that the multifunction $(t,x)\mapsto \partial_u^{\sp >}d(x,U(t))$ has closed values and a closed graph, we conclude from \eqref{CQ1} that $0\in \partial_u^{\sp >}d(\u(t_o),U(t_o))$. This contradicts that the constraint qualification is satisfied by $U(\cdot)$ at $\u$. Thus, for $k$ sufficiently large, $U(\cdot)$ satisfies the constraint qualification (CQ)  at $u_{\gk}$.

One can easily prove the lower semicontinuity of the multifunctions $t\mapsto \bar{B}_{\delta}(\x(t))$ and $t\mapsto \big[U(t)\cap \bar{B}_{\delta}(\u(t))\big]$, and hence, the  functions $h_1$ and $h_2$ that are Lipschitz  in $(x,u)$ are  lower semicontinuous in $(t,x,u)$.  A simple argument by contradiction that uses the uniform convergence of $u_{\gk}$ to $\u$, the local property of the limiting normal cone, and the constraint qualification (CQ) being satisfied by $U(\cdot)$ at $u_{\gk}$, yields that, for $k$ sufficiently large, the multifunction $U(\cdot)\cap \bar{B}_{\delta}(\u(\cdot))$ satisfies the constraint qualification (CQ) at $u_{\gk}$. Since $t\mapsto \bar{B}_{\delta}(\x(t))$ is lower semicontinuous  and its values are closed, convex, and have nonempty interior, and $x_{\gk}$ converges uniformly to $\x$, then we deduce that, for $k$ large enough, $\bar{B}_{\delta}(\x(\cdot))$ satisfies the constraint qualification (CQ) at $x_{\gk}$. As for all $t\in[0,1],$ $u_{\gk}(t)\in U(t),$  and by Theorem \ref{lasthopefully},  $x_{\gk}(t)\in\inte C$,  then (H1) yields that, for $t\in [0,1]$ a.e., we have ${f}(t,\cdot,\cdot)$ is $M_{\ell}$-Lipschitz on the {\it neighborhood}  of $(x_{\gk}(t),u_{\gk}(t))$. On the other hand, by Theorem \ref{lasthopefully}, we have, for  $k$ sufficiently large,  
\begin{eqnarray*}\nonumber (x_{\gk}(0), x_{\gk}(1))&\in& \big[\left(C_0\cap \bar{B}_{\delta_{\bbbp o}}(\x(0))\right)+\tilde{\rho}{B}\big]\cap(\inte C) \times \big[\left(C_1\cap \bar{B}_{\delta_{\bbbp o}}(\x(1))\right)+\tilde{\rho}{B}\big]\cap(\inte C) \\ &\subset&\inte (\tilde{C}_0(\delta) \times \tilde{C}_1(\delta)). \label{inta5}\end{eqnarray*}
Therefore, the data of $(P_{\gk})$ satisfy all the hypotheses of the maximum principle stated in \cite[p.331]{vinter}, which is deduced from  \cite[Theorem 9.3.1]{vinter}, by taking the  scalar state constraint function  therein to be   $h(t,x,u)=\max\{h_1(t,x,u),h_2(t,x,u)\}$.
When applying  that maximum principle to  $(P_{\gk})$ at the optimal solution $\left((x_{\gk},z_{\gk},u_{\gk}),v_{\gk}\right)$, we notice that:
\begin{itemize}
\item As in Step 2 of the proof of \cite[Theorem 5.1]{verachadi},  the adjoint variable $p_{\gk}$ corresponding to $x_{\gk}$ satisfies the adjoint equation  that is linear in $p_{\gk}$ and: there exists   $M_1>0$ such that \begin{equation*}\label{ntc1} \|p_{\gk}(t)\|\leq M_1\|p_{\gk}(1)\|\;\;\hbox{for all}\;t\in[0,1]. \end{equation*}
This gives that $p_{\gk}=0$ if and only if $p_{\gk}(1)=0$. Therefore, in the nontriviality condition, $\|p_{\gk}\|_\infty$ can be replaced by $\|p_{\gk}(1)\|.$
\item From Remark (a) on \cite[page 330]{vinter}, the set $I(\x)$  in the statement of \cite[Theorem 9.3.1]{vinter}, and hence in that of \cite[p.331]{vinter} for $h_1$ and $h_2$,   can be replaced,  by 
\begin{equation}\label{supp}\{t\in [0,1] : \partial_{x}^{\sp>} h(t,\x(t))\not=\emptyset\}.
\end{equation}
Moreover, if $h(t,x):=d(x,F(t))$, where $F\colon [0,1]\rightrightarrows\R^m$ be a lower semicontinuous multifunction with closed and nonempty values, then by \cite[Proposition 2.3(a) $\&$ Equation (2.15)]{lowen91} we have that 
\begin{equation}\label{del>} \{t\in [0,1] : \partial_{x}^{\sp>} h(t,\x(t))\not=\emptyset\}=\{t\in [0,1] : (t,\x(t))\in \bdry\Gr F(t)\}.\end{equation}
\item The measure corresponding to the state constraint ``$\sp x(t)\in \bar B_{\delta}(\x(t))$ for all $t\in [0,1]$" (or equivalently ``$h_1(t,x(t),u(t))\leq 0$\sp") is {\it null}. This is  due to the fact,  from Theorem \ref{lasthopefully},  that  for $k$ sufficiently large, $x_{\gk}(t)\in B_{\delta}(\x(t))$ for all $t\in [0,1]$, which gives, for $k$ sufficiently large, that this measure is supported in \begin{eqnarray*} && \{t\in [0,1] : \partial_{(x,u)}^{\sp>} h_1(t,x_{\gk}(t),u_{\gk}(t))\not=\emptyset\}\\[3pt] &\overset{\eqref{del>}}{=}& \{t\in [0,1] : (t,x_{\gk}(t))\in \bdry\Gr B_{\delta}(\x(t))\}\\[3pt]&=&\left\{t\in [0,1] : (t,x_{\gk}(t))\in \bigcup_{t\in [0,1]} \{t\}\times S_{\delta}(\x(t))\right\}= \emptyset, \end{eqnarray*}
where $S_{\delta}(\x(t)):=\{x\in \R^n: \|x-\x(t)\|=\delta\}.$ 

\item The adjoint vector $e_{\gk}$ corresponding to the optimal state $z_{\gk}$ is the constant $-\frac{\l_{\gk}}{2}$ (where $\l_{\gk}$ is the cost multiplier). Indeed, since $v_{\gk}$ converges strongly in $L^2$ to $\bar{v}$, we have, for $k$ sufficiently large, that $z_{\gk}(1)\in [0,\delta)\subset \inte ([-\delta,\delta])$. Add to this that $\dot{e}_{\gk}(t)=0$ for $t\in[0,1]$ a.e.,  and by the transversality condition we get  $$\textstyle e_{\gk}(t)= e_{\gk}(1)\in -\left\{\frac{\l_{\gk}}{2}\right\}- N^L_{[-\delta,\delta]}(z_{\gk}(1))=-\left\{\frac{\l_{\gk}}{2}\right\},\;\;\forall t\in [0,1].$$ Hence, for $k$ sufficiently large, $e_{\gk}(t)=-\frac{\l_{\gk}}{2}$ for all $t\in [0,1]$.
\item  The $BV$-function associated  to the state constraint ``$\sp u(t)\in U(t)\cap \bar B_{\delta}(\u(t))$" (or equivalently ``$h_2(t,x(t),u(t))\leq 0$\sp") in the multiple state maximum principle takes the form $\int_{[0,t)} \beta_{\gk}(t) \mu^{\bbbp o}_{\gk}(dt)$, for $t\in[0,1)$ and $\int_{[0,1]} \beta_{\gk}(t) \mu^{\bbbp o}_{\gk}(dt)$, for $t=1$, where,   by  \eqref{supp} and \eqref{del>},
$$\supp\{\mu_{\gk}^{\bbbp o}\}\subset \left\{t\in [0,1] : (t,u_{\gk}(t))\in \bdry\Gr\left[U(t)\cap \bar{B}_{\delta}(\u(t))\right]\right\},$$
$$\beta_{\gk}(t)\in \partial_u^{>}d(u_{\gk}(t), U(t)\cap \bar{B}_{\delta}(\u(t))) \;\;\;\mu_{\gk}^{\bbbp o}\;\textnormal{a.e.,}$$
 and, by  the (CQ) property  and \cite[Formula (9.17)]{vinter},
 $$\partial_u^{>}d(u_{\gk}(t), U(t)\cap \bar{B}_{\delta}(\u(t))) \subset\left[\conv\bar{N}^L_{U(t)\cap \bar{B}_{\delta}(\u(t))}(u_{\gk}(t))\cap \left(\bar{B}\setminus\{0\}\right)\right].$$
 However, with a simple normalization procedure (see e.g., the relevant part in the proof of \cite[Theorem 3.4]{hoehener}) we can easily obtain a function  $\Omega_{\gk}\in NBV([0,1];\R^m)$  satisfying, together with $\beta_{\gk}$ and $\mu^{\bbbp o}_{\gk}$, $\Omega_{\gk}(t)= \int_{[0,t] }\beta_{\gk}(t) \mu^{o}_{\gk}(dt)$, for $t\in(0,1]$, $\Omega_{\gk}(0) =0$,  and the statement of the multiple state maximum principle remains valid with this function $\Omega_{\gk}$.

\end{itemize}
Therefore, we obtain the existence of $\l_{\gk}\geq 0$, $p_{\gk}\in \W^{1,1}([0,1];\R^n)$, $q_{\gk}\in \W^{1,1}([0,1];\R^m)$,  $\Omega_{\gk} \in NBV([0,1];\R^m)$, $\mu^{\bbbp o}_{\gk}\in\C^{\oplus}([0,1];\R^{m})$, and a Borel measurable function $\beta_{\gk}\colon [0,1]\f\R^m$  such that    $\Omega_{\gk}(t)= \int_{[0,t]} \beta_{\gk}(s)\mu^{\bbbp o}_{\gk}(ds)$ for  all $t\in(0,1]$, $\Omega_{\gk}(0)=0$, and conditions $(i)$-$(v)$ of this lemma hold. Note that in the adjoint equation \eqref{adjapp}, the values of $f(t,\cdot,\cdot)$ outside the set $\left[C \cap \bar{B}_{\delta}(\x(t))\right]\times \big[(U(t)+\tilde{\rho}\bar{B})\cap  \bar{B}_{\delta}(\u(t))\big]$ are not involved in the calculation of the subdifferential $\partial^{\sp (x,u)} {f}_{\Vphi}(t,x_{\gk}(t),u_{\gk}(t))$ since $(x_{\gk}(t),u_{\gk}(t))$ belongs to the {\it interior} of that set, for $k$ sufficiently large and for all $t\in [0,1]$.  \end{proof}

\subsection*{Step 3: Finalizing the proof} Since, by Lemma \ref{mpw12aprox}, conditions $(i)$-$(v)$ of \cite[Proposition 4.7]{ZNpaper} are valid, then to terminate the proof of  Theorem \ref{mpw12bis}, it is sufficient to follow the proof of \cite[Theorem 4.8]{ZNpaper}.

\section{Example} \label{example} In this section, we present an example in which we illustrate how Theorem \ref{mpw12bis}
can be used to find an optimal solution when the perturbation function is {\it nonautonomous}. We consider the problem $(P)$ in which (see Figure \ref{Fig1}):
\begin{itemize}
\item The {\it nonautonomous} perturbation mapping $f\colon[0,\frac{\pi}{2}]\times\R^2\times\R\f\R^2$  is defined by $$f(t,(x_1,x_2),u)=(t-x_1-x_2-u,-t+x_1-x_2+u).$$
\item The function $\psi\colon \R^2\f\R$ is defined by $\psi(x_1,x_2):=(x_1^2+x_2^2-1)(x_1^2+x_2^2-4)$. Hence, the set $C$ is  the  nonconvex and compact  set $$C:=\{(x_1,x_2) : (x_1^2+x_2^2-1)(x_1^2+x_2^2-4)\leq 0\}.$$
\item The objective function $g\colon\R^4\f\R\cup\{\infty\}$  is defined  by $$g(x_1,x_2,x_3,x_4):=\begin{cases} \frac{1}{2}(x_3^2+x_4^2-1)& \;\;(x_3,x_4)\in C, \vspace{0.1cm}\\ \infty&\;\;\hbox{Otherwise}. \end{cases}$$
\item The function $\varphi$ is the indicator function of $C$. 
\item The control multifunction is $U(t):=[t,\pi]$ for all $t\in [0,\frac{\pi}{2}]$.
\item The two sets  $C_0$ and $C_1$ are defined by $C_0:=\{(1,0)\}$ and $C_1:=\{(0,x_2) : x_2\geq 0\}.$
\end{itemize}
One can easily verify that the hypotheses (H2)-(H4.4) are satisfied.  Add to this that $f(t,\cdot,\cdot)$ is globally Lipschitz on $\R^2\times\R$, $g$ is globally Lipschitz on $\R^2\times C$, and $U(t)$ is convex with nonempty interior, we deduce that all the hypotheses of Theorem \ref{mpw12bis} are satisfied. Since the function $g$ vanishes on the unit circle and is strictly positive elsewhere in $C$, we may seek  for $(P)$ an optimal solution $(\x,\u)$ such that, if possible,  $\x:=(\x_1,\x_2)$ belongs to the unit circle,  and hence  we have 
\begin{equation} \label{ex1}\begin{cases} \x_1^2(t)+\x_2^2(t)=1,\,\forall t\in [0,\tfrac{\pi}{2}];\,\hbox{and}\;\,\x_1(t)\dot{\x}_1(t)+\x_2(t) \dot{\x}_2(t)=0,\; \forall t\in[0,\frac{\pi}{2}] \;\mbox{a.e.,}\\[1pt] \x(0)\tran=(1,0)\,\;\hbox{and}\;\,\x(\tfrac{\pi}{2})\tran=(0,1).\end{cases} \end{equation} 
\begin{figure}[t]
\centering
\includegraphics[scale=0.4]{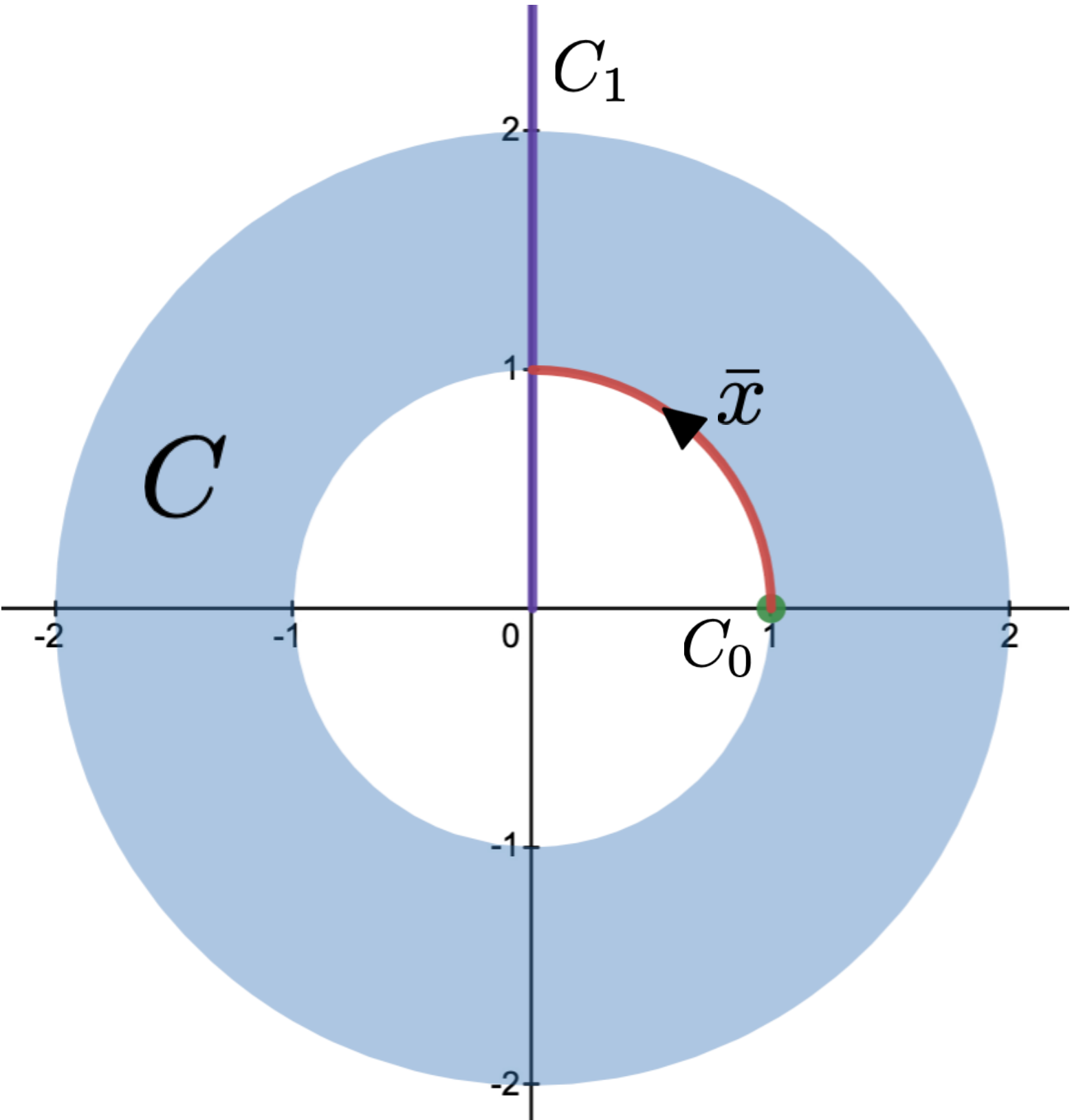}
\caption{\label{Fig1} Example \ref{example}}
\end{figure}
Applying Theorem \ref{mpw12bis} to this optimal solution $(\x,\u)$ and using Remark \ref{specialcases}$(c)$,   we get the existence of an adjoint vector $\p:=(\p_1,\p_2)\in BV([0,\frac{\pi}{2}];\R^2)$, a finite signed Radon measure $\bar{\nu}$ on $\left[0,\frac{\pi}{2}\right]$, $\xxi\in L^{\infty}([0,\tfrac{\pi}{2}];\R^{+})$, and $\l\geq 0$ such that, when incorporating  equations \eqref{ex1}  into $(i)$- $(vi)$, these latter simplify to the following:
\begin{enumerate}[(a)]
\item $\|\p(\frac{\pi}{2})\|+\l=1.$
\item The admissibility equation holds, that is,  for  $t\in[0,\frac{\pi}{2}]$ a.e.,  \begin{equation*}\begin{cases}\dot{\bar{x}}_1(t)=t -\x_1(t)-\x_2(t)-\u+6\x_1(t)\bar{\xi}(t),\\ 
\dot{\bar{x}}_2(t)=-t+ \x_1(t)-\x_2(t)+\u+6\x_2(t)\bar{\xi}(t). \end{cases} \end{equation*} 
\item The adjoint equation is satisfied, that is, for  $t\in [0,\frac{\pi}{2}]$, 
\begin{eqnarray*} d\p(t)&=&\begin{pmatrix} 
1\;\; & -1 \\
1\;\; & 1 
\end{pmatrix}\p(t)\sp dt\sp+\sp \bar{\xi}(t)\begin{pmatrix} 
8\x_1^2(t)-6\;\; & 8\x_1(t)\x_2(t)  \\
8\x_1(t)\x_2(t) \;\; & 8\x_2^2(t)-6 \end{pmatrix}\p(t)\sp dt\sp\\&-&6\sp \begin{pmatrix} 
\x_1(t) \\
\x_2(t)
\end{pmatrix} d\bar{\nu}.\end{eqnarray*}
\item The complementary slackness condition is valid, that is,   $$\bar{\xi}(t)(\p_1(t)\x_1(t) + \p_2(t)\x_2(t))=0,  \:\: t\in \left[0,\tfrac{\pi}{2}\right]\; \mbox{a.e.}$$
\item The transversality condition holds: $-\p(\frac{\pi}{2})\in \l\{(0,1)\}+\{(\a,0)\in\R^2 : \a\in\R\}$
\item $\max\{u(\p_2(t)-\p_1(t)): u\in [t,\pi]\}$ is attained at $\u(t)$ for $t\in[0,\frac{\pi}{2}]$ a.e.
\end{enumerate}
From  \eqref{ex1} combined with (b), we deduce that 
\begin{equation} \label{ex2} \bar{\xi}(t)=\frac{1+(\u(t)-t)(\x_1(t)-\x_2(t))}{6},\;\textstyle \forall t\in[0,\frac{\pi}{2}].\end{equation}
On the other hand,  the use of (d) and \eqref{ex1} in (c), yields that, for  $t\in [0,\frac{\pi}{2}]$,   \begin{equation}\label{ex3} \begin{cases}d{\bar{p}}_1= (\p_1(t)-\p_2(t)-6\bar{\xi}(t)\p_1(t))\sp dt-6\x_1(t)\sp d\bar{\nu},\\d{\bar{p}}_2= (\p_1(t)+\p_2(t)-6\bar{\xi}(t)\p_2(t))\sp dt-6\x_2(t)\sp d\bar{\nu}.  \end{cases} \end{equation} 
Now in order to exploit (f), we temporarily assume that \begin{equation}\label{ex4} \p_2(t)<\p_1(t)\;\hbox{for}\; t\in[0,\tfrac{\pi}{2}]\;\hbox{a.e.,}\end{equation} 
hoping to be able to find $\p_1$ and $\p_2$ satisfying this condition. In this case, $\u(t)=t$ for all $t\in [0,\frac{\pi}{2}]$, which gives using \eqref{ex2} that $\bar{\xi}(t)=\frac{1}{6}$ for all $t\in \left[0,\frac{\pi}{2}\right].$ Using these  values of $\u$ and $\bar{\xi}$,  and \eqref{ex1},  we can solve for $(\x_1,\x_2)$ the two differential equations of (b)  to obtain that $$\textstyle \x(t)\tran=(\cos t,\sin t),\;\;\forall t\in[0,\frac{\pi}{2}].$$
Employing (a), (d), (e), and \eqref{ex3}, a simple calculation yields that $$\begin{cases} \l=\tfrac{3}{8}\;\,\hbox{and}\;\,\p(\tfrac{\pi}{2})=(\tfrac{1}{2},-\tfrac{3}{8}),\\[2pt]\p(t)\tran=\tfrac{1}{2}(\sin t ,-\cos t)\;\hbox{on}\;[0,\tfrac{\pi}{2})\,\;\hbox{and}\;\,d\bar{\nu}=\tfrac{1}{16}\sp\delta_{\big\{\bbp\tfrac{\pi}{2}\bbp\big\}}\;\hbox{on}\;[0,\tfrac{\pi}{2}],\end{cases}\vspace{0.01cm}$$
where $\delta_{\{a\}}$ denotes the unit measure concentrated on the point $a$. Note that for all $t\in [0,\tfrac{\pi}{2}]$, we have $\p_2(t)<\p_1(t) $, and hence, the temporary assumption \eqref{ex4} is satisfied. Therefore, the above analysis, realized via Theorem \ref{mpw12bis}, produces  an admissible pair $(\x,\u)$, where $$\textstyle \x(t)\tran=(\cos t,\sin t)\;\;\hbox{and}\;\;\u(t)=t,\;\;\forall t\in [0,\frac{\pi}{2}],$$
which is optimal for $(P)$.

\end{document}